\theoremstyle{plain}
\theoremstyle{definition}
\theoremstyle{remark}
\newcommand{\eps}{\varepsilon}
\newcommand{\kap}{\varkappa}
\newcommand{\BR}{\mathbb{R}}
\newcommand{\BP}{\mathbb{P}}
\newcommand{\BE}{\mathbb{E}}
\newcommand{\BZ}{\mathbb{Z}}
\newcommand{\filt}{\mathscr{F}}
\newcommand{\sfG}{\mathsf{G}}
\newcommand{\cc}{\mathsf{c}}
\newcommand{\ind}{\mathds{1}}
\newtheorem{rmk}{Remark}
\newtheorem{thm}{Theorem}
\newtheorem{lem}[thm]{Lemma}
\newtheorem{pro}[thm]{Proposition}
\begin{document}
%\doublespacing
%\begin{spacing}{2}
%\title{Approximation of linear controlled dynamical systems with small random noise and fast periodic sampling}
\title[Approximation of noisy control systems with sampling]{Approximation of linear controlled dynamical systems with small random noise and fast periodic sampling}
\author{Shivam Dhama}
%\address{Mathematics\\ Indian Institute of Technology Gandhinagar}
\email{shivam.dhama@iitgn.ac.in} 

\author{Chetan D. Pahlajani}
%\address{Mathematics\\ Indian Institute of Technology Gandhinagar}
\email{cdpahlajani@iitgn.ac.in} 
%\address{Discipline of Mathematics, Indian Institute of Technology Gandhinagar, Palaj, Gandhinagar 382355, India}
\thanks{The second author acknowledges research support from DST SERB Project No. EMR/2015/000904.}
%\date{\today.}
\subjclass[2010]{60F17}
\keywords{sampled-data system, hybrid dynamical system, periodic sampling, stochastic differential equation.}
\maketitle
\address{Discipline of Mathematics, Indian Institute of Technology Gandhinagar, Palaj, Gandhinagar 382355, India}

\begin{abstract}
In this paper, we study the dynamics of a linear control system with given state feedback control law in the presence of fast periodic sampling at temporal frequency $1/\delta$ ($0 < \delta \ll 1$), together with small white noise perturbations of size $\eps$ ($0<\eps \ll 1$) in the state dynamics. For the ensuing continuous-time stochastic process indexed by two small parameters $\eps,\delta$, we obtain
 effective ordinary and stochastic differential equations  describing the mean behavior and the typical fluctuations about the mean in the limit as $\eps,\delta \searrow 0$. The effective fluctuation process is found to vary, depending on whether $\delta \searrow 0$ faster than/at the same rate as/slower than $\eps \searrow 0$. The most interesting case is found to be the one where $\delta,\eps$ are comparable in size; here, the limiting stochastic differential equation for the fluctuations has both a diffusive term due to the small noise and an effective drift term which captures the cumulative effect of the fast sampling. In this regime, our results yield a time-inhomogeneous Markov process which provides a strong (pathwise) approximation of the original non-Markovian process, together with estimates on the ensuing error. A simple example involving an infinite time horizon linear quadratic regulation problem illustrates the results. 
% \textsf{\color{blue}(Modify last sentence and incorporate physical relevance of the studied example.) \color{black}}
\end{abstract}
%\begin{keyword}
 %sampled-data system, hybrid dynamical system, periodic sampling, stochastic differential equation. 
 %\end{keyword}
%\date{\today.}
%\maketitle

\section{Introduction}

%(New) 
The control of dynamical systems governed by ordinary differential equations ({\sc ode}) frequently involves situations where control actions are computed and executed at discrete time instants, rather than being updated continuously. An example is the sample-and-hold implementation of a continuous-time state feedback control law, where the state of the plant is sampled at discrete time instants and used to compute the control action, which is then held fixed until the next sample is taken. Such \textit{sampled-data systems} with interweaving of continuous and discrete dynamics arise naturally in the context of computer-controlled systems \cite{ChenFrancis-book,YuzGoodwin-book} and networked control systems \cite{antunes2011volterra,antunes2013stability,ModlingandAnalysisNCS14} and have stimulated much research in the field of \textit{hybrid dynamical systems} \cite{GST-HybDynSys-book}. Much of the research effort centers on investigating whether, and to what extent, the system with sampling retains various important properties (e.g., stability) of its idealized counterpart, viz., the fully continuous-time system with continuous state measurements and control updates.

%(New) 
Dynamical systems of interest---with or without control---are almost always subject to uncertainties, either due to imperfect modelling or external disturbances or both. This motivates the use of stochastic process models, which, in the continuous-time case with white noise perturbations, take the form of \textit{stochastic differential equations} ({\sc sde}) \cite{Oksendal,KS91}. For stochastic processes solving {\sc sde} with small noise and/or a separation of time scales, asymptotic techniques are invaluable in providing tractable tools to understand system dynamics, whether through obtaining simpler approximate models or by computing rates of decay of probabilities of rare events. Such \textit{limit theorems} for stochastic processes have been extensively studied; see, for instance, \cite{DZ,FW_RPDS,SHS-RPM,KKO-book}.

%(New) 
In the present work, we obtain strong (pathwise) approximations of control systems with periodic sampling and random perturbations in the \textit{small noise, fast sampling} limit. We focus on a sample-and-hold implementation of a given state feedback control law in a linear system in the presence of periodic sampling at temporal frequency $1/\delta$ ($0 < \delta \ll 1$) and white noise perturbations of size $\eps$ ($0<\eps \ll 1$) in both the state dynamics and the state measurements. The dynamics of the state variable are now given by a continuous-time stochastic process indexed by two small parameters $\eps,\delta$. For this process, we compute something like a first-order perturbation expansion in terms of effective (i.e., independent of $\delta,\eps$) {\sc ode} and {\sc sde}, which describe, respectively, the \textit{mean} behavior and the \textit{typical fluctuations} about the mean in the limit as $\eps,\delta \searrow 0$. 

%(New) 
While the {\sc ode} for the mean behavior is, not surprisingly, the closed-loop {\sc ode} for continuous sampling and control updates, regardless of how $\eps,\delta \searrow 0$, the exact form of the fluctuation process is found to depend on the relative rates at which $\eps,\delta \searrow 0$. Following \cite{FreidlinSowers,KS2014}, which study stochastic processes with multiple small parameters,\footnote{Note that in \cite{FreidlinSowers}, the parameter $\delta$ relates to homogenization rather than sampling, while in \cite{KS2014}, it corresponds to a fast time scale.} we consider three regimes, depending on whether $\delta \searrow 0$ faster than/at the same rate as/slower than $\eps \searrow 0$. The most interesting case is found to be the one where $\delta,\eps$ are comparable in size (e.g., $\delta/\eps$ is a positive constant); here, the limiting {\sc sde} for the fluctuations has both a diffusive term due to the small noise and an \textit{effective drift} term which captures the cumulative effect of the fast sampling. Our main contribution in this work is to obtain, for each of these regimes, moment estimates on the pathwise error between the true process and its first-order approximation; the latter includes the effects of the {\sc ode} for the mean behavior together with the {\sc sde} for fluctuations.\footnote{Strictly speaking, for the scaling we consider, the first-order correction is given by an {\sc sde} in two out of three regimes, viz., when the time $\delta$ between samples goes to zero faster than/at the same rate as the parameter $\eps$ characterizing the small noise. When $\delta$ goes to zero slower than $\eps$, the first-order correction is deterministic and is given by an {\sc ode}.} The results here, which can be thought of as functional analogues of the Law of Large Numbers and the Central Limit Theorem, respectively, thus provide an analytically simpler ``non-hybrid" surrogate for the system of interest, together with estimates on the ensuing error. 

%(New) 
%\textsf{\color{blue}[Paragraph on sampled-data systems literature]\color{black}} 
The literature on control systems with sampling is extensive; here, we very briefly touch upon a few facets of this large body of research to provide a bit of context for our work. Much of the work on sampled-data systems takes one of two different approaches. One approach involves obtaining discrete-time models first and then designing the control laws, e.g.,  \cite{NesicTeelKokotovic-SCL1999,NesicTeel-TAC2004}. The other so-called \textit{emulation} approach involves designing control laws in continuous time and then discretizing them using a sample-and-hold device, e.g., \cite{NesicTeelCarnevale-TAC2009,Khalil-TAC-2004,LNT-EurJControl2002,KK-IJRNC-2009}. Important questions in this latter context include quantifying, for a control law which suitably stabilizes the continuous-time system, conditions (e.g., maximum allowable sampling period) which ensure stability of the sampled-data system. Our calculations in this paper are in the spirit of the emulation approach. Motivated by applications in networked control systems, where the components of a control system may be spatially distributed and interact with each other over networks with limited communication capabilities, \textit{periodic event-triggered control} strategies have recently received much attention, see, e.g., \cite{heemels2013periodic,borgers2018time,PETCNNS20}; here, samples are taken periodically and, at each sampling time, ``event-triggering conditions" are checked to decide whether or not transmission of new state/output measurements and control signals takes place. Questions of interest center around exploration of event-triggering conditions which ensure stability or fulfilment of performance criteria, while also satisfying various system and resource constraints.

Various aspects of stochasticity in control systems with sampling have also been explored. These include problems (motivated in part by applications in networked control systems) where samples are taken at random times, e.g., at the arrival times of a Poisson/renewal process \cite{ACM,HespTeel-17thMTNS,TanwaniChatterjeeLiberzon,TanwaniYufereva-Aut2020}, and also sampled control systems with Brownian noise given by Ito {\sc sde}. A few representative papers pertaining to the latter include \cite{anderson2015self} which studies second moment stability of controlled {\sc sde} with self-triggered sampling, \cite{dong2020asymptotic} which addresses stabilization for periodic {\sc sde} with state observations according to time-varying observation interval sequences, \cite{chen2021mean} which considers mean square exponential stability for systems given by Ito {\sc sde} with aperiodic sampling and time delays, \cite{TanwaniYufereva-Aut2020} which focusses on performance analysis of filters for continuous-time nonlinear stochastic systems with the state evolving according to an Ito {\sc sde} and noisy measurements arriving according to the event times of a Poisson process, \cite{dragan2020stochastic} which deals with stochastic linear quadratic optimal control for an Ito {\sc sde} with piecewise-constant admissible controls.

%The recent chapter  provides an excellent survey of the developments on this front from the late 1950's through 2018, focusing largely on the problem of stabilization of sampled-data control systems with random sampling. 

%\color{blue} (\textsf{Perhaps mention the finance paper in SCL here.}) \color{black}
%with the large enough intensity of the stochastic process of interest. 

%(New) 
%\textsf{\color{blue}[Importance of this work] \color{black}} 
The overwhelming majority of investigations in sampled-data systems address questions of stability, and thus focus on the \textit{limiting} behavior of the system in the large time limit. In contrast, questions regarding the \textit{transient} behavior of the system, e.g., over finite time horizons, how much do sampling and random noise cause a control system to deviate from its noiseless fully continuous-time counterpart, have received much less attention. The importance of such questions about the transient behavior stems from wanting to ensure that the state does not experience undesirable large excursions before the large time behavior starts to set in. A natural follow-up question is whether one can refine the deterministic picture and include---to leading order---the cumulative effect of noise and sampling, thereby getting a more accurate picture of the transient behavior. Our calculations take a step in this direction by obtaining, for an asymptotic problem with small Brownian noise and fast periodic sampling, estimates on the mean of the maximum deviation between the true dynamics and both its zeroth- and first-order perturbation expansions over any finite time interval. 

%(New) 
%\textsf{\color{blue}[Paragraph on Math papers with multiscale analysis]\color{black}} 
%Mathematically, the calculations here fit into the general setting of asymptotic analysis of stochastic processes with multiple small parameters, the interplay among which encodes the relative dominance of competing effects, see, e.g., \cite{FreidlinSowers,Spil-AppMathOptim,KS2014,AriAnupBor18,athreya2019simultaneous,
%bourguin2020typical,rockner2021diffusion} with our work perhaps being closest to that in \cite{FreidlinSowers, KS2014}.
%Mathematically, the calculations in this paper are inspired by or related to several studies of stochastic processes with multiple scales, e.g., 

Mathematically, the calculations here fit into the general setting of asymptotic analysis of stochastic processes with multiple small parameters, where the relative rates at which the parameters vanish determine different asymptotic regimes with differing limiting dynamics. A few representative studies on such limit theorems for stochastic processes include the interplay between large deviations and homogenization \cite{FreidlinSowers}, small noise and multiple time scales \cite{Spil-AppMathOptim,KS2014,athreya2019simultaneous} including the case of fractional Brownian noise \cite{bourguin2020typical}, stochastic averaging with multiple scales \cite{rockner2021diffusion}, small noise and control magnitude for controlled diffusion processes \cite{AriAnupBor18}. Our work is  perhaps closest to \cite{KS2014}, which focuses on the limiting analysis of fluctuations, around the deterministic homogenized limit, for the slow component in a system of small noise {\sc sde} with multiple time scales. There, the forms of both the limiting {\sc ode} for the homogenized limit and the {\sc sde} for the fluctuations are found to vary depending on the exact relation between the fast time scale and the size of the noise; moreover, the limiting fluctuations include additional effective drift terms in certain cases. In the present work, we have a single vector {\sc sde} rather than a slow-fast system, and the focus in our problem is on the interaction between small noise and fast sampling, with the effect of the latter being not fast convergence to the invariant measure but rather the introduction of vanishing \textit{short memory}.\footnote{The memory arises from the fact that the dynamics in between samples does in fact depend on the measurement at the most recent sample.}

The rest of the paper is organized as follows. First, in Section \ref{S:ProblemStatementResults}, we formulate our problem and state the main results: Theorem \ref{T:flln} which describes the mean behavior, and Theorems \ref{T:fluctuations-R-1-2} and \ref{T:fluctuation_reg_3} which together describe fluctuations about the mean for all three regimes. Proofs are provided in Sections \ref{S:FLLN} and \ref{S:Fluct-R12}.
% and \ref{S:Fluct-R3} are devoted to the proofs of Theorems \ref{T:flln}, \ref{T:fluctuations-R-1-2} and \ref{T:fluctuation_reg_3}, respectively. 
 Next, we illustrate our results on a specific example in Section \ref{S:Example}, and then end with some concluding remarks and directions for future work in Section \ref{S:Conclusions}.

\section{Problem Statement and Results}\label{S:ProblemStatementResults}

In this section, we clearly formulate our problem of interest and state our main results: Theorems \ref{T:flln}, \ref{T:fluctuations-R-1-2} and \ref{T:fluctuation_reg_3}. Fix positive integers $n,m$. The state and control spaces for our problem will be $\BR^n$ and $\BR^m$, respectively. Let $A \in \BR^{n \times n}$, $B \in \BR^{n \times m}$ be constant matrices. We will assume that $A$ is invertible. Our linear system of interest is 
\begin{equation}\label{E:linsys}
\frac{d x}{dt} = A x + B u; \qquad x(0)=x_0 \in \BR^n,
\end{equation}
where $x(t):[0,\infty) \to \BR^n$ represents the state of our system, and $u(t):[0,\infty) \to \BR^m$ is a control input. With the goal of meeting a certain objective (e.g., asymptotically stabilizing the system, minimizing a cost functional, etc.), we use the feedback control law $u=-Kx$ where $K \in \BR^{m \times n}$ is a suitable matrix, to obtain the closed-loop system $\frac{d x}{dt} = (A-BK) x$, $x(0)=x_0 \in \BR^n$. Of course, this can be equivalently written in integral form as
\begin{equation}\label{E:linsys-closedloop}
x(t)=x_0 + \int_0^t (A-BK)x(s) \thinspace ds,
\end{equation}
and can be easily solved to yield $x(t) = e^{t(A-BK)} x_0$.
%The solution to \eqref{E:linsys-closedloop} is, of course, given by  
%using a zero-order hold \cite{YuzGoodwin-book}. 

Now consider the following sample-and-hold implementation of the above feedback control law using a zero-order hold \cite{YuzGoodwin-book}. We fix $\delta >0$, and assume that samples are taken at uniformly spaced time instants $t_k^\delta \triangleq k \delta$, $k \in \BZ^+$. Thus, at each time $k\delta$, $k \in \BZ^+$, the state $x^\delta(k\delta)$ is measured, the control is computed according to $u^\delta_{k} \triangleq -K x^\delta(k\delta)$, and is held fixed in \eqref{E:linsys} over the time interval $[k\delta,{(k+1)}\delta)$. The state $x^\delta(t)$ is now obtained by successively solving \eqref{E:linsys} over time intervals $(k\delta,(k+1)\delta)$ with $u=u^\delta_{k}$, using as initial condition $x^\delta(k\delta)=x^\delta(k\delta-)$, and concatenating the pieces. Note that the state $x^\delta(t)$ depends on $\delta$; of course, one expects that as $\delta \searrow 0$, $x^\delta(t)$ converges to $x(t)$ solving \eqref{E:linsys-closedloop}.

To summarize, if we set $x^\delta(0-) \triangleq x^\delta(0)=x_0 \in \BR^n$, then the state $x^\delta(t)$ evolves according to 
a \textit{hybrid dynamical system} \cite{GST-HybDynSys-book} whose evolution for $t \in (k\delta,(k+1)\delta)$, $k \in \BZ^+$, is governed by 
\begin{equation}\label{E:hybsys}
\begin{aligned}
\frac{dx^\delta}{dt} &= Ax^\delta + B u^\delta_{k}, \qquad \text{for $t \in (k\delta,(k+1)\delta)$,} \\
x^\delta(k\delta) & = x^\delta(k\delta -), \medspace \qquad u^\delta_{k} \triangleq - Kx^\delta(k\delta -).
\end{aligned}
\end{equation}
The solution to \eqref{E:hybsys} can be explicitly computed. Indeed, using the variation of constants formula \cite[Theorem 5.2]{Hes_LST}, we get 
%\begin{equation*}
$x^\delta(t) = \left[e^{(t-k\delta)A}  - \int_{k\delta}^t e^{(t-s)A} BK \thinspace ds\right] x^\delta(k\delta-)$ for $t \in [k\delta,(k+1)\delta)$, $k \in \BZ^+$.
%\end{equation*}
Thus, we have
\begin{equation}\label{E:state-det}
x^\delta(t) = \sum_{k \ge 0} \ind_{[k\delta,(k+1)\delta)} (t) \thinspace \left[e^{(t-k\delta)A}  - \int_{k\delta}^t e^{(t-s)A} BK \thinspace ds\right] x^\delta(k\delta-). 
\end{equation}
We note that while $x^\delta(t)$ is continuous for $t \in [0,\infty)$, it is only piecewise-smooth; indeed, we typically have $\dot{x}^\delta(k\delta-) \neq \dot{x}^\delta(k\delta+)$ for $k \ge 1$.

\begin{rmk}
Although the state $x^\delta(t)$ is continuous for all $t \ge 0$, we have been a bit fussy in denoting the measurement at time $k\delta$ by $x^\delta(k\delta-)$ (as opposed to $x^\delta(k\delta)$) to underscore the fact that the initial condition needed to solve \eqref{E:hybsys} on the subinterval $(k\delta,(k+1)\delta)$ is available from the solution at the end of the previous subinterval $x^\delta(k\delta-)$. These same considerations apply to the solutions of the {\sc sde} considered below. 
\end{rmk}

We would now like to explore the situation where the system described by \eqref{E:hybsys} is subjected to two different (independent) sources of uncertainty. First, we will assume that there is some sensor or observation noise \cite{Zeitouni-TAC1988}, due to which each state measurement (at the discrete times $k\delta$) yields the sum of the true value of the state and a small additive random error. Secondly, we assume that the ``physical" system itself is subjected to 
small white-noise perturbations (as might be attributable to fluctuating external forces). Both these random effects will be assumed to be of size $\eps$, $0 < \eps \ll 1$. Our state variable will now be a continuous-time stochastic process $\{X^{\eps,\delta}_t:t \ge 0\}$ taking values in $\BR^n$.

To make this precise, we start with a complete probability space $(\Omega,\filt,\BP)$ equipped with a filtration $\{\filt_t:t \ge 0\}$ which satisfies the usual conditions \cite{KS91}. We assume further that this setup supports two independent $n$-dimensional Brownian motions $W=\{W_t: t \ge 0\}$ and $V=\{V_t:t \ge 0\}$. As in the case of \eqref{E:hybsys}, we will assume that the state is measured at the uniformly spaced time instants $t_k^\delta=k\delta$, $k \in \BZ^+$. This time, however, each state measurement will yield the sum of the true value $X^{\eps,\delta}_{k\delta-}$ with a small error term $\eps V_{k\delta}$ due to the measurement noise. Taking the control input over the interval $[k\delta,(k+1)\delta)$ to be $U^{\eps,\delta}_{k} \triangleq -K(X^{\eps,\delta}_{k\delta-} + \eps V_{k\delta})$, the dynamics of $X^{\eps,\delta}_t$ over $[k\delta,(k+1)\delta)$ will now be governed by the {\sc sde} $dX^{\eps,\delta}_t = [AX^{\eps,\delta}_t + BU^{\eps,\delta}_{k}]dt + \eps \thinspace dW_t$. As before, we concatenate solutions over successive intervals $[k\delta,(k+1)\delta)$ using as initial condition $X^{\eps,\delta}_{k\delta}=X^{\eps,\delta}_{k\delta-}$. 

Thus, letting $X^{\eps,\delta}_{0-} \triangleq X^{\eps,\delta}_0=x_0 \in \BR^n$, the state $X^{\eps,\delta}_t$ is a stochastic process with continuous sample paths which solves 
\begin{equation}\label{E:sde}
\begin{aligned}
dX^{\eps,\delta}_t &= [AX^{\eps,\delta}_t + BU^{\eps,\delta}_{k}]dt + \eps dW_t \qquad \qquad \text{for $t \in [k\delta,(k+1)\delta)$, $k \in \BZ^+$,}\\
X^{\eps,\delta}_{k\delta}& =X^{\eps,\delta}_{k\delta-},  \quad U^{\eps,\delta}_{k} \triangleq -K(X^{\eps,\delta}_{k\delta-} + \eps V_{k\delta}). 
\end{aligned}
\end{equation}
Note that the solution to \eqref{E:sde} can be explicitly computed. Indeed, for $t \in [k\delta,(k+1)\delta)$, $k \in \BZ^+$, we have
%\begin{equation*}
$X^{\eps,\delta}_t = \left[ e^{(t-k\delta)A} - \int_{k\delta}^t e^{(t-s)A} BK \thinspace ds \right] X^{\eps,\delta}_{k\delta-} + \eps \left[ \int_{k\delta}^t e^{(t-s)A} dW_s -  \int_{k\delta}^t e^{(t-s)A} BK V_{k\delta} \thinspace ds \right]$, 
%\end{equation*}
and we thus have that for $t \ge 0$,
\begin{equation}\label{E:state-stoch}
X^{\eps,\delta}_t = \sum_{k \ge 0} \ind_{[k\delta,(k+1)\delta)} (t) \thinspace \left\{ \left[ e^{(t-k\delta)A} - \int_{k\delta}^t e^{(t-s)A} BK \thinspace ds \right] X^{\eps,\delta}_{k\delta-} + \eps \left[ \int_{k\delta}^t e^{(t-s)A} dW_s -  \int_{k\delta}^t e^{(t-s)A} BK V_{k\delta} \thinspace ds \right] \right\}.
\end{equation}
 
%Our state variable will now be a continuous-time stochastic process $\{X^{\eps,\delta}_t:t \ge 0\}$ taking values in $\BR^n$, with $X^{\eps,\delta}_0=x_0$, which evolves as follows. At time $t=0$, we set $U^{\eps,\delta}_0 \triangleq -K(x_0 + \eps V_0)$ and solve the stochastic differential equation 
%\begin{equation*}
%dX^{\eps,\delta}_t = [A X^{\eps,\delta}_t + B U^{\eps,\delta}_0]dt + \eps dW_t; \qquad X^{\eps,\delta}_0=x_0,  \quad U^{\eps,\delta}_0 \triangleq -K(x_0 + \eps V_0) 
%\end{equation*}
%over the time interval $[0,t_1)$. At time $t_1$, a measurement of the state yields $X^{\eps,\delta}_{t_1-} + \eps V_{t_1}$; the evolution over $[t_1,t_2)$ is now governed by the {\sc sde}
%\begin{equation*}
%dX^{\eps,\delta}_t = [AX^{\eps,\delta}_t + BU^{\eps,\delta}_{t_1}]dt + \eps dW_t; \qquad X^{\eps,\delta}_{t_1}=X^{\eps,\delta}_{t_1-},  \quad U^{\eps,\delta}_{t_1} \triangleq -K(X^{\eps,\delta}_{t_1-} + \eps V_{t_1}). 
%\end{equation*}
%This process is continued indefinitely. Thus, %Owing to continuity, we can (as before) replace any occurrence of $X^\eps_{t-}$ with $X^\eps_t$. 

If, in \eqref{E:sde}, one fixes $\eps \in (0,1)$ and takes a limit as $\delta \searrow 0$, one expects the limiting dynamics to be governed by the process $\{X^\eps_t:t \ge 0\}$ which solves
\begin{equation}\label{E:closed-loop-sde}
dX^\eps_t = \left[(A-BK)X^\eps_t -\eps BKV_t \right]dt + \eps dW_t, \qquad X^\eps_0=x_0.
\end{equation}
We can now pose our principal questions of interest. In the absence of sampling effects, i.e., in the formal limit when $\delta \searrow 0$ with $\eps \in (0,1)$ fixed, the convergence as $\eps \searrow 0$ of $X^\eps_t$ solving \eqref{E:closed-loop-sde} to $x(t)$ solving \eqref{E:linsys-closedloop} is very straightforward. In fact, even for nonlinear systems $\dot{x}(t)=b(x(t))$, $x(0)=x_0 \in \BR^n$, the convergence to $x(t)$, as $\eps \searrow 0$, of $X^\eps_t$ solving $dX^\eps_t=b(X^\eps_t)dt + \eps \sigma(X^\eps_t)dW_t$, $X^\eps_0=x_0$,  is classical and very well-understood \cite{FW_RPDS}.\footnote{Of course, one needs to impose sufficient regularity conditions on $b$ and $\sigma$.} Our main goal in this paper is to understand how classical limit theorems need to be modified to account for sampling effects. More precisely, we would like to understand how the relative rates at which $\eps$, $\delta \searrow 0$, influence the convergence of $X^{\eps,\delta}_t$ to $x(t)$. 

Following \cite{FreidlinSowers,KS2014}, we will organize our thoughts as follows. We assume that $\delta=\delta_\eps \searrow 0$ as  $\eps \searrow 0$ and  $\lim_{\eps \searrow 0}\delta_\eps/\eps$ exists in $[0,\infty]$. We now identify the following three asymptotic regimes:
\begin{equation*}
\cc \triangleq \lim_{\eps \searrow 0}\delta_\eps/\eps \begin{cases}=0 & \text{Regime 1,}\\ \in (0,\infty) & \text{Regime 2,}\\ = \infty & \text{Regime 3.}\end{cases}
\end{equation*}
For the cases $\cc=0$, $\cc \in (0,\infty)$, set
\begin{equation}\label{E:kappa}
\kap(\eps) \triangleq \left|{\delta}/{\eps}-\cc\right|.
\end{equation}
Of course, $\lim_{\eps \searrow 0} \kap(\eps)=0$. For the case $\cc=\infty$, we will find it more convenient to view $\eps=\eps_\delta \searrow 0$ as $\delta \searrow 0$. 
%Limit theorems are classified according to different \textit{regimes}, viz. $c=0$, $c \in (0,\infty)$, and $c=\infty$.

To state our results, we start by fixing some notation. For $x=(x_1,\dots,x_n) \in \BR^n$, we let $|x| \triangleq \sqrt{\sum_{i=1}^n |x_i|^2}$ be the standard Euclidean norm, and for $A \in \BR^{n\times n}$, we let $|A|$ be the corresponding induced matrix norm. For each $\delta \in (0,1)$, define the map $\pi_\delta:[0,\infty) \to \delta \BZ^+$ by
%\begin{equation*}
$\pi_\delta(t) \triangleq \delta \lfloor {t}/{\delta} \rfloor \quad \text{for $t \in [0,\infty)$,}$
%\end{equation*}
where $\lfloor \cdot \rfloor$ denotes the integer floor function. Thus, $\pi_\delta(\cdot)$ is a time-discretization operator which rounds down the continuous time $t \in [0,\infty)$ to the nearest multiple of $\delta$. Note that the function $x^\delta(t)$ given by \eqref{E:state-det} solves the integral equation
\begin{equation}\label{E:ie}
x^\delta(t) = x_0 + \int_0^t \left[A x^\delta(s) - BK x^\delta(\pi_\delta(s))\right] \thinspace ds,
\end{equation}
while $X^{\eps,\delta}_t$ given by \eqref{E:state-stoch} solves the stochastic integral equation
\begin{equation}\label{E:sie}
%\begin{aligned}
X^{\eps,\delta}_t = x_0 + \int_0^t \left[A X^{\eps,\delta}_s - BK X^{\eps,\delta}_{\pi_\delta(s)} - \eps BK V_{\pi_\delta(s)}\right] \thinspace ds + \eps \thinspace W_t.
%\end{aligned}
\end{equation}
We now state our first result. 
\begin{thm}\label{T:flln}
Let $x(t)$ and $X_t^{\varepsilon,\delta}$ solve \eqref{E:linsys-closedloop} and \eqref{E:sie} respectively.
Then, for any $\eps>0, \delta>0$ and $T>0$, there exists a positive constant $C$ depending only on $A,B,K$ and $n$ such that 
\begin{equation*}
\BE\left[\sup_{0\le s \le t}|X_s^{\eps,\delta}-x(s)|\right]\le C\left[\varepsilon \sqrt{T}(T+1)+\delta e^{TC}|x_0|\right]e^{Ct}, \qquad \thinspace \forall \thinspace t\in [\thinspace 0,T\thinspace].
\end{equation*} 
\end{thm}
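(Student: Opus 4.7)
The plan is to compare the integral equations \eqref{E:linsys-closedloop} and \eqref{E:sie} via a Gronwall-type argument, with the sampling effect entering through the displacement $X_s^{\eps,\delta}-X_{\pi_\delta(s)}^{\eps,\delta}$. Subtracting the two equations and writing $Y_t \Def X_t^{\eps,\delta}-x(t)$, I split $X_{\pi_\delta(s)}^{\eps,\delta}-x(s) = (X_{\pi_\delta(s)}^{\eps,\delta} - X_s^{\eps,\delta}) + Y_s$ to obtain
\begin{equation*}
Y_t = \int_0^t (A-BK) Y_s\, ds \;+\; BK\int_0^t\!\left[X_s^{\eps,\delta}-X_{\pi_\delta(s)}^{\eps,\delta}\right] ds \;-\; \eps BK \int_0^t V_{\pi_\delta(s)}\, ds \;+\; \eps W_t.
\end{equation*}
Taking $\sup_{0 \le r \le t}$ and expectations, the Brownian term is handled by Doob's maximal inequality, giving $\BE[\sup_{r \le t}|W_r|] \le C\sqrt{t}$, while the measurement-noise drift is bounded pathwise by $\int_0^t |V_{\pi_\delta(u)}|\, du$, whose expectation is $O(t^{3/2})$ since $\BE[|V_{\pi_\delta(u)}|] \le C\sqrt{u}$. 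Together these contribute an $O(\eps\sqrt{t}(1+t))$ term, matching the form required.

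Next, I need to control $\BE[|X_s^{\eps,\delta}-X_{\pi_\delta(s)}^{\eps,\delta}|]$. For $s \in [k\delta,(k+1)\delta)$, \eqref{E:sde} gives
\begin{equation*}
X_s^{\eps,\delta}-X_{k\delta}^{\eps,\delta} = \int_{k\delta}^s A X_r^{\eps,\delta}\, dr \;-\; (s-k\delta)\bigl[BK X_{k\delta}^{\eps,\delta} + \eps BK V_{k\delta}\bigr] \;+\; \eps(W_s-W_{k\delta}),
\end{equation*}
so in expectation this is bounded by $C\delta\,\sup_{r \le T}\BE[|X_r^{\eps,\delta}|] + C\eps\delta\sqrt{T} + C\eps\sqrt{\delta}$. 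This forces me to first establish an a priori bound on $\sup_{r \le T}\BE[|X_r^{\eps,\delta}|]$, which I obtain by applying the same sup-then-expectation recipe directly to \eqref{E:sie}, using $|X_{\pi_\delta(s)}^{\eps,\delta}| \le \sup_{r \le s}|X_r^{\eps,\delta}|$ to absorb the delay term, and then invoking Gronwall. This yields $\BE[\sup_{r \le T}|X_r^{\eps,\delta}|] \le C(|x_0|+\eps)e^{CT}$. Feeding this back produces $\BE[|X_s^{\eps,\delta}-X_{\pi_\delta(s)}^{\eps,\delta}|] \le C\delta|x_0|e^{CT} + C\eps(\sqrt{\delta}+\delta\sqrt{T})$, where the $\eps$-pieces are harmless (they are dominated by the noise contributions already identified, since $\delta, \sqrt{\delta} \le 1$).

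Assembling the pieces gives an inequality of the form
\begin{equation*}
\BE\!\left[\sup_{r \le t}|Y_r|\right] \le C_0\left[\eps\sqrt{T}(T+1) + \delta|x_0|e^{CT}\right] + C_1 \int_0^t \BE\!\left[\sup_{r \le s}|Y_r|\right] ds, \qquad t \in [0,T],
\end{equation*}
to which Gronwall's inequality applies and yields the claimed estimate with constants depending only on $A,B,K,n$. I expect the main obstacle to be bookkeeping: keeping the various cross terms (products of $\eps$ and $\delta$, factors of $T$ versus $t$) organized so that they cleanly absorb into the two terms $\eps\sqrt{T}(T+1)$ and $\delta|x_0|e^{TC}$ advertised in the statement, and in particular making sure the a priori moment bound on $X^{\eps,\delta}$ is strong enough to give a sampling-increment estimate scaling linearly in $\delta$ uniformly on $[0,T]$, despite the non-martingale character of the measurement-noise integral.
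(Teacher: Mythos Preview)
Your approach is correct but takes a genuinely different route from the paper's. The paper does not compare $X^{\eps,\delta}$ with $x$ directly; instead it inserts the \emph{deterministic} sampled process $x^\delta$ solving \eqref{E:ie} and uses the triangle inequality
\[
\BE\Bigl[\sup_{s\le t}|X^{\eps,\delta}_s - x(s)|\Bigr] \le \BE\Bigl[\sup_{s\le t}|X^{\eps,\delta}_s - x^\delta(s)|\Bigr] + \sup_{s\le t}|x^\delta(s)-x(s)|.
\]
For the first piece, the Lipschitz-type bound \eqref{E:lipschitz} absorbs the sampled argument directly into the running supremum, and Gronwall gives a clean $\eps\sqrt{T}(T+1)e^{Ct}$ with no $\delta$-contamination. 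The second piece is purely deterministic and is estimated via the closed form $x(t)=e^{t(A-BK)}x_0$, producing the $\delta|x_0|e^{CT}e^{Ct}$ term. Because the $\eps$- and $\delta$-effects are completely decoupled by this splitting, no cross terms ever appear and the bookkeeping obstacle you anticipate simply does not arise.

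Your direct route works too---the cross term $\eps\delta\,\text{poly}(t)\,e^{Ct}$ coming from the a priori bound does absorb into the $\eps$ part once $\delta\le 1$ and one keeps the forcing $t$-dependent before the final Gronwall---but it costs you the extra moment estimate on $X^{\eps,\delta}$ and the attendant tracking of mixed products. A lighter variant of your idea, avoiding both the intermediary $x^\delta$ and the a priori step, is to split $X^{\eps,\delta}_{\pi_\delta(s)}-x(s) = Y_{\pi_\delta(s)} + \bigl(x(\pi_\delta(s))-x(s)\bigr)$ rather than $(X^{\eps,\delta}_{\pi_\delta(s)}-X^{\eps,\delta}_s)+Y_s$: then $|Y_{\pi_\delta(s)}|\le\sup_{r\le s}|Y_r|$ goes straight into Gronwall, and only the deterministic increment $x(\pi_\delta(s))-x(s)$ carries the $\delta$, reproducing the paper's clean separation without its detour through $x^\delta$.
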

% can be interpreted as ensuring stability of the model \eqref{E:linsys-closedloop} with respect to small noise and fast sampling effects. 
We will prove this result in Section \ref{S:FLLN}.
%\color{magenta} \underline{\textsf{Regime $1$ and $2$}}\footnote{Regime $1$ and $2$ correspond to $\cc=0$ and $\cc\in (0,\infty)$ respectively.} \color{black}\\

Next, we would like to understand the typical fluctuations of $X^{\eps,\delta}_t$ about $x(t)$. To proceed, consider the rescaled fluctuation processes 
\begin{equation}\label{E:fluct-processes}
%\begin{aligned}
Z^{\eps,\delta}_t \triangleq \frac{X^{\eps,\delta}_t - x(t)}{\eps} \quad \text{for Regimes 1 and 2, and}\qquad 
U^{\eps,\delta}_t \triangleq \frac{X^{\eps,\delta}_t - x(t)}{\delta} \quad \text{for Regime 3}.
%\end{aligned}
\end{equation}
Note that in each case, we are rescaling in terms of the \textit{coarser} parameter, i.e., the parameter which goes to zero more slowly.\footnote{Of course, in Regime 2, one could equivalently rescale with respect to $\delta$.} Of course, we have
%\begin{equation*}
$X^{\eps,\delta}_t = x(t) + \eps Z^{\eps,\delta}_t$ in Regimes 1 and 2, and $X^{\eps,\delta}_t = x(t) + \delta U^{\eps,\delta}_t$ in Regime 3.
%\end{equation*}
From \eqref{E:linsys-closedloop} and \eqref{E:sie}, we see that 
\begin{equation}\label{E: cent-limit-eq}
\begin{aligned}
Z_t^{\eps,\delta} &=\int_0^t (A-BK)Z_s^{\eps,\delta}\thinspace ds -BK \int_0^t V_{\pi_\delta(s)}\thinspace ds + BK \int_0^t \frac{X_s^{\varepsilon, \delta}-X_{\pi_\delta(s)}^{\varepsilon, \delta}}{\eps}\thinspace ds+ W_t,\\
U_t^{\eps,\delta}&=\int_0^t (A-BK)U^{\eps,\delta}_s\thinspace ds-\frac{\eps}{\delta}BK\int_0^{t}V_{\pi_{\delta}(s)}\thinspace ds+BK\int_0^t\frac{ X^{\eps,\delta}_s-X_{\pi_{\delta(s)}}^{\eps,\delta}}{\delta}\thinspace ds+\frac{\eps}{\delta} W_t.
\end{aligned}
\end{equation}
Noting that the dynamics of $Z_t^{\eps,\delta}$ and $U_t^{\eps,\delta}$ involve the state $X_t^{\eps,\delta}$ as well as the small parameters $\eps,\delta$, it is now natural to ask whether, in the limit as $\eps,\delta \searrow 0$, $Z_t^{\eps,\delta}$ and $U_t^{\eps,\delta}$ can be replaced by 
\textit{effective} (i.e., independent of $\eps$, $\delta$, $X^{\eps,\delta}_t$) fluctuation processes $Z_t$, $U_t$ such that
\begin{equation*}
X^{\eps,\delta}_t = x(t) + \eps Z_t + o(\eps) \quad \text{in Regimes 1, 2, and} \quad X^{\eps,\delta}_t = x(t) + \delta U_t + o(\delta) \quad \text{in Regime 3,}
\end{equation*}
and if yes, what estimates can be obtained on the remainder. 

Our main result in this regard for Regimes 1 and 2 is the following. In these regimes, since $\lim_{\eps \searrow 0}\delta_\eps/\eps = \cc \in [0,\infty)$, there exists $\eps_0 \in (0,1)$ such that
\begin{equation}\label{E:eps0}
\left|{\delta_{\eps}}/{\eps}-\cc\right|<1 \quad \text{whenever} \quad 0<\eps<\eps_0. \quad \text{In particular, for $0<\eps<\eps_0$, we have $\delta_{\eps}<(\cc+1)\eps$.}
\end{equation}

\begin{thm}\label{T:fluctuations-R-1-2}
Let $x(t)$ and $X_t^{\varepsilon,\delta}$ solve \eqref{E:linsys-closedloop} and \eqref{E:sie} respectively. Suppose that we are in Regime $i \in \{1,2\}$, i.e., $\lim_{\eps \searrow 0}\delta_\eps/\eps = \cc \in [0,\infty)$. %Set $Z^{\eps,\delta}_t \triangleq \frac{X^{\eps,\delta}_t - x(t)}{\eps}$ for $t \ge 0$, and 
Let $Z=\{Z_t: t \ge 0\}$ be the unique strong solution of 
\begin{equation}\label{E:lim-fluct-R-1-2}
Z_t=\int_0^t (A-BK) Z_s\thinspace ds-BK \int_0^t V_s\thinspace ds + \frac{\cc}{2} BK \int_0^t (A-BK) x(s) \thinspace ds +W_t.
\end{equation}
Then, there exists $\eps_0 \in (0,1)$ such that for any $T>0$, $0<\eps<\eps_0$, we have
\begin{equation}
\BE\left[\sup_{0 \le t \le T} |X^{\eps,\delta}_t - x(t) - \eps Z_t|\right] \le \eps
(\cc+1)^2 K_{\ref{T:fluctuations-R-1-2}}e^{K_{\ref{T:fluctuations-R-1-2}}T} \left[\sqrt{\eps}(|x_0| +1 + T) + \kap(\eps)|x_0|\right],
\end{equation}
where $K_{\ref{T:fluctuations-R-1-2}}$ is some positive constant which depends only on $A,B,K$ and $n$, and $\kap(\eps) \searrow 0$ is as in \eqref{E:kappa}.
\end{thm}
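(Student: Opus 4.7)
The plan is to analyze the difference $R_t^{\eps,\delta} \triangleq Z_t^{\eps,\delta}-Z_t$, where $Z_t^{\eps,\delta} = \eps^{-1}(X_t^{\eps,\delta}-x(t))$; since $X_t^{\eps,\delta}-x(t)-\eps Z_t = \eps R_t^{\eps,\delta}$, the target inequality reduces to showing that $\BE[\sup_{0\le t\le T}|R_t^{\eps,\delta}|]$ is bounded by $(\cc+1)^2 K_{\ref{T:fluctuations-R-1-2}}e^{K_{\ref{T:fluctuations-R-1-2}}T}[\sqrt{\eps}(|x_0|+1+T)+\kap(\eps)|x_0|]$. Subtracting \eqref{E:lim-fluct-R-1-2} from the $Z^{\eps,\delta}$-equation in \eqref{E: cent-limit-eq} gives
\begin{equation*}
R_t^{\eps,\delta} = \int_0^t (A-BK) R_s^{\eps,\delta}\,ds \;-\; BK \int_0^t (V_{\pi_\delta(s)}-V_s)\,ds \;+\; BK\,\mathcal{E}_t^{\eps,\delta},
\end{equation*}
where
\begin{equation*}
\mathcal{E}_t^{\eps,\delta} \triangleq \int_0^t \frac{X_s^{\eps,\delta}-X_{\pi_\delta(s)}^{\eps,\delta}}{\eps}\,ds \;-\; \frac{\cc}{2} \int_0^t (A-BK) x(s)\,ds.
\end{equation*}
Once the two driving terms on the right are controlled in $L^1(\sup)$, Gronwall applied to $t\mapsto\BE\sup_{0\le s\le t}|R_s^{\eps,\delta}|$ closes the argument.

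The Brownian-sampling term is straightforward. By Fubini and $\BE|V_s-V_{\pi_\delta(s)}|\le\sqrt{n\delta}$,
\begin{equation*}
\BE\Bigl[\sup_{0\le t\le T}\Bigl|\int_0^t (V_{\pi_\delta(s)}-V_s)\,ds\Bigr|\Bigr] \le \int_0^T \BE|V_s-V_{\pi_\delta(s)}|\,ds \le T\sqrt{n\delta},
\end{equation*}
which by \eqref{E:eps0} is $O((\cc+1)^{1/2}T\sqrt{\eps})$ and fits the target bound.

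The heart of the proof is $\mathcal{E}_t^{\eps,\delta}$. For $s\in[\pi_\delta(s),\pi_\delta(s)+\delta)$, \eqref{E:sde} yields
\begin{equation*}
X_s^{\eps,\delta}-X_{\pi_\delta(s)}^{\eps,\delta} = (s-\pi_\delta(s))(A-BK)X_{\pi_\delta(s)}^{\eps,\delta} + A\int_{\pi_\delta(s)}^s (X_u^{\eps,\delta}-X_{\pi_\delta(s)}^{\eps,\delta})\,du - \eps(s-\pi_\delta(s))BK V_{\pi_\delta(s)} + \eps(W_s-W_{\pi_\delta(s)}).
\end{equation*}
Dividing by $\eps$, integrating on $[0,t]$, and subtracting $(\cc/2)\int_0^t(A-BK)x(s)\,ds$ produces four summands. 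The three ``lower-order'' pieces (the $A$-iteration term, the $V$-contribution, and the $W$-contribution) are controlled by standard one-step estimates: plugging \eqref{E:sde} into itself together with Theorem \ref{T:flln} and $\BE\sup_{s\le T}|V_s|\le C\sqrt{T}$ gives $\BE\sup_{u\in[\pi_\delta(s),s]}|X_u^{\eps,\delta}-X_{\pi_\delta(s)}^{\eps,\delta}| = O(\delta |x_0|e^{CT}+\eps\sqrt{\delta})$, and after division by $\eps$, integration on $[0,t]$, and use of $\delta/\eps\le\cc+1$, their total contribution to $\BE\sup_t|\mathcal{E}_t^{\eps,\delta}|$ is $O((\cc+1)T\sqrt{\eps})$, dominated by the $W$-piece. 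The \emph{leading} summand
\begin{equation*}
\frac{1}{\eps}\int_0^t (s-\pi_\delta(s))(A-BK)X_{\pi_\delta(s)}^{\eps,\delta}\,ds
\end{equation*}
must be matched to $(\cc/2)\int_0^t(A-BK)x(s)\,ds$ in three sub-steps: (i) replace $X_{\pi_\delta(s)}^{\eps,\delta}$ by $x(s)$ using Theorem \ref{T:flln} together with Lipschitzness of $x(\cdot)$, incurring an error of size $(\delta/\eps)\cdot T\cdot O(\eps+\delta) = O((\cc+1)^2 T\eps)$; (ii) use that $(s-\pi_\delta(s))-\delta/2$ has zero mean on each full sampling cell and the Lipschitz bound $|\frac{d}{ds}(A-BK)x(s)|\le C|x_0|e^{CT}$ (immediate from $x(t)=e^{t(A-BK)}x_0$) to obtain $\int_0^t(s-\pi_\delta(s))(A-BK)x(s)\,ds = (\delta/2)\int_0^t(A-BK)x(s)\,ds + O(\delta^2 T|x_0|e^{CT})$, so that division by $\eps$ combined with $\delta^2/\eps\le(\cc+1)\delta$ keeps the error at $O((\cc+1)^2 T\eps|x_0|e^{CT})$; (iii) replace $\delta/(2\eps)$ by $\cc/2$, picking up $\tfrac{1}{2}\kap(\eps)\cdot|\int_0^t(A-BK)x(s)\,ds|\le CT\kap(\eps)|x_0|e^{CT}$.

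The main obstacle is step (ii)--(iii): the explicit midpoint identity $\frac{1}{\delta}\int_{k\delta}^{(k+1)\delta}(s-k\delta)\,ds=\delta/2$ is essential — the trivial bound $s-\pi_\delta(s)\le\delta$ would give a contribution of order $\delta/\eps$, which converges to $\cc$ rather than $\cc/2$ and loses the vanishing-mean improvement needed to separate $\sqrt{\eps}$ errors (from Brownian increments), $\delta$ errors (from deterministic discretization), and $\kap(\eps)$ errors (from the mismatch between $\delta/\eps$ and $\cc$) in the correct asymptotic form. Assembling the estimates on $\mathcal{E}_t^{\eps,\delta}$ and on the Brownian-sampling term into an inequality of the form $\BE\sup_{s\le t}|R_s^{\eps,\delta}| \le f(\eps,T,|x_0|) + C\int_0^t\BE\sup_{r\le s}|R_r^{\eps,\delta}|\,ds$ with $f(\eps,T,|x_0|) = (\cc+1)^2 C e^{CT}[\sqrt{\eps}(|x_0|+1+T)+\kap(\eps)|x_0|]$, Gronwall's inequality then yields the claimed bound.
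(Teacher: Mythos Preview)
Your proposal is correct and follows the same overall strategy as the paper: write $X^{\eps,\delta}_t-x(t)-\eps Z_t=\eps(Z^{\eps,\delta}_t-Z_t)$, isolate the sampling term $\mathcal{E}^{\eps,\delta}_t=\eps^{-1}\int_0^t(X_s^{\eps,\delta}-X_{\pi_\delta(s)}^{\eps,\delta})\,ds-\ell(t)$, bound it, and close with Gronwall. The difference lies in how $\mathcal{E}^{\eps,\delta}_t$ is decomposed. The paper uses the explicit variation-of-constants solution \eqref{E:state-stoch} to write $X_s^{\eps,\delta}-X_{\pi_\delta(s)}^{\eps,\delta}$ in terms of $e^{(s-\pi_\delta(s))A}-I$ (Lemma~\ref{L:errors}), producing the three pieces $L_1^{\eps,\delta},L_2^{\eps,\delta},L_3^{\eps,\delta}$; the crucial cancellation is then encoded in Lemma~\ref{L:J_estimate}, which shows that the cell averages of $\delta^{-1}(e^{(s-\pi_\delta(s))A}-I)$ match $\tfrac12 A$ up to $O(\delta)$. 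You instead work directly from the integral form of \eqref{E:sde}, obtaining the first-order piece $(s-\pi_\delta(s))(A-BK)X_{\pi_\delta(s)}^{\eps,\delta}$ plus a Picard remainder $A\int_{\pi_\delta(s)}^s(X_u^{\eps,\delta}-X_{\pi_\delta(s)}^{\eps,\delta})\,du$; your cancellation step (ii) is the elementary midpoint identity $\delta^{-1}\int_{k\delta}^{(k+1)\delta}(s-k\delta)\,ds=\delta/2$, which plays exactly the role of Lemma~\ref{L:J_estimate}. Your route is arguably more elementary in that it avoids the matrix-exponential algebra and makes the midpoint cancellation transparent; the paper's route packages the second-order Taylor remainder more cleanly and separates the stochastic pieces ($L_2,L_3$) from the leading drift ($L_1$) at the outset. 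Either way, the three error scales $\sqrt{\eps}$, $\delta$, and $\kap(\eps)$ emerge from the same sources you identify.
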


\begin{rmk}
The process $Z_t$ is thus obtained by formally taking limits as $\eps, \delta \searrow 0$ in the first equation in \eqref{E: cent-limit-eq}, while replacing $(1/{\eps})\int_0^t \left({X_s^{\varepsilon, \delta}-X_{\pi_\delta(s)}^{\varepsilon, \delta}}\right) \thinspace ds$ by the effective drift term $({\cc}/{2}) \int_0^t (A-BK) x(s) \thinspace ds$, which captures the cumulative effect of fast sampling. Note that the latter \textit{does} involve the zeroth-order behavior given by $x(t)$; further, it vanishes in Regime 1 where $\delta \ll \eps$. We also note that Theorem \ref{T:fluctuations-R-1-2} enables us to approximate, in a strong (pathwise) sense, the non-Markovian process $X^{\eps,\delta}_t$ by the time-inhomogeneous Markov process $x(t)+\eps Z_t$, and provides estimates for the ensuing error.
\end{rmk}

\begin{rmk}
Recall that if $\{Y^\eps\}_{\eps \in (0,1)}$ and $Y$ are random variables taking values in a metric space $S$, then we say that \textit{$Y^\eps$ converges in distribution} to $Y$  as $\eps \searrow 0$, denoted $Y^\eps \Rightarrow Y$, if for every bounded continuous function $f:S \to \BR$, we have $\lim_{\eps \searrow 0} \BE[f(Y^\eps)]=\BE[f(Y)]$\cite{ConvProbMeas}. One can easily show that in Regime $i \in \{1,2\}$, i.e., $\lim_{\eps \searrow 0}\delta_\eps/\eps = \cc \in [0,\infty)$ and for any $T>0,~ Z^{\eps,\delta_\eps} \Rightarrow Z$ in $C([0,T];\BR^n)$ as $\eps \searrow 0$; here, the space $C([0,T];\BR^n)$ is given the metric induced by the sup norm.
\end{rmk}

We next state the corresponding result for Regime 3 ($\cc=\infty$). In this regime, since
\begin{equation}\label{E:kappas}
\text{$\tilde\kap(\delta) \triangleq {\eps}/{\delta} \searrow 0$ as $\delta \searrow 0$,  there exists $\delta_0\in(0,1)$ such that whenever $0<\delta<\delta_0$, we have $\eps<\delta$.}  
\end{equation}
As will be seen below, the first-order correction $U_t$ in this case to $x(t)$ is deterministic.

\begin{thm}\label{T:fluctuation_reg_3}
Let $x(t)$ and $X_t^{\varepsilon,\delta}$ solve \eqref{E:linsys-closedloop} and \eqref{E:sie} respectively. Suppose that we are in Regime 3, i.e., $\lim_{\eps \searrow 0}\delta_\eps/\eps =\infty$. Let $U=\{U_t: t \ge 0\}$ be the unique solution of 
\begin{equation}\label{E:lim-fluct_reg3}
U_t=\int_0^t (A-BK) U_s\thinspace ds+\frac{1}{2}BK \int_0^t(A-BK)x(s)\thinspace ds.
\end{equation}
Then, for any $T>0$, and $0<\delta<\delta_0$ with $\delta_0$ as in \eqref{E:kappas}, we have 
\begin{equation}
\BE\left[\sup_{0 \le t \le T} |X^{\eps,\delta}_t - x(t) - \delta U_t|\right] \le \delta\left[\tilde\kap(\delta)+\sqrt{\delta}(1+|x_0|)\right]K_{\ref{T:fluctuation_reg_3}}e^{K_{\ref{T:fluctuation_reg_3}}T}, 
\end{equation}
where $K_{\ref{T:fluctuation_reg_3}}$ is some positive constant depending only on $A,B,K$ and $n$ and $\tilde\kap(\delta) \searrow 0$ is as in \eqref{E:kappas}.
\end{thm}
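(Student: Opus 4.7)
The approach parallels that of Theorem \ref{T:fluctuations-R-1-2}, with the roles of $\eps$ and $\delta$ interchanged, since in Regime 3 $\delta$ is the coarser parameter. Setting $D_t \Def U_t^{\eps,\delta} - U_t$ and subtracting \eqref{E:lim-fluct_reg3} from the second identity in \eqref{E: cent-limit-eq}, we obtain
\begin{equation*}
D_t = \int_0^t (A-BK) D_s\thinspace ds + \tilde\kap(\delta)\, W_t - \tilde\kap(\delta)\, BK\int_0^t V_{\pi_\delta(s)}\thinspace ds + BK\, R_t^{\eps,\delta},
\end{equation*}
where the sampling residue is
\begin{equation*}
R_t^{\eps,\delta} \Def \int_0^t \left[\frac{X_s^{\eps,\delta}-X_{\pi_\delta(s)}^{\eps,\delta}}{\delta} - \tfrac{1}{2}(A-BK)x(s)\right] ds.
\end{equation*}
Taking $\sup_{s\le t}$ and expectation, Gronwall reduces the problem to bounding the $L^1$-sup of the three forcing terms. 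The two $\tilde\kap(\delta)$ terms are immediate: $\BE[\sup_{s\le T}|W_s|] \le C\sqrt{T}$ and $\BE[\sup_{s\le T}\int_0^s |V_{\pi_\delta(r)}|\thinspace dr] \le C T^{3/2}$, so each contributes a term of order $\tilde\kap(\delta)$ (up to polynomial-in-$T$ factors).

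The real work lies in bounding $R_t^{\eps,\delta}$. Using the integral form of \eqref{E:sde} on each subinterval and adding/subtracting $BK\, X_{k\delta}^{\eps,\delta}$ inside the $A$ integral gives, for $s\in[k\delta,(k+1)\delta)$,
\begin{equation*}
\frac{X_s^{\eps,\delta}-X_{k\delta}^{\eps,\delta}}{\delta} = \frac{s-k\delta}{\delta}(A-BK) X_{k\delta}^{\eps,\delta} + \frac{1}{\delta}\int_{k\delta}^s A(X_r^{\eps,\delta}-X_{k\delta}^{\eps,\delta})\, dr - \eps\,\frac{s-k\delta}{\delta} BK\, V_{k\delta} + \tilde\kap(\delta)(W_s-W_{k\delta}).
\end{equation*}
Substituting into $R_t^{\eps,\delta}$ and integrating, the leading sawtooth piece decomposes as
\begin{equation*}
\int_0^t \frac{s-\pi_\delta(s)}{\delta}(A-BK)[X_{\pi_\delta(s)}^{\eps,\delta}-x(s)]\, ds + \int_0^t \left[\frac{s-\pi_\delta(s)}{\delta}-\tfrac{1}{2}\right](A-BK)x(s)\, ds + \tfrac{1}{2}\int_0^t (A-BK)x(s)\, ds,
\end{equation*}
whose last summand cancels exactly the $-\tfrac{1}{2}\int_0^t(A-BK)x(s)\, ds$ in $R_t^{\eps,\delta}$. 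The first piece is controlled via Theorem \ref{T:flln} (which in Regime 3 yields $\BE[\sup|X^{\eps,\delta}-x|]\lesssim \delta(1+|x_0|)$ since $\eps<\delta$) together with smoothness of $x$; the second is an elementary averaging bound, since $\frac{s-\pi_\delta(s)}{\delta}-\tfrac{1}{2}$ has zero mean per period and antiderivative of size $O(\delta)$, so integration by parts gives $O(\delta(1+|x_0|))$. The remaining ``double integral'', $V$, and $W$ contributions are of smaller order, the $W$ piece handled by Doob's inequality on the sum of independent centered $\int_{k\delta}^{(k+1)\delta}(W_s-W_{k\delta})\, ds$ terms.

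The main obstacle is the careful bookkeeping required on $R_t^{\eps,\delta}$, tracking dependence on both $|x_0|$ and $T$ uniformly in $t$, and correctly handling the supremum via maximal inequalities for the martingale-like contributions and a max over the $T/\delta$ subintervals for pathwise increments of $X^{\eps,\delta}$. Once $\BE[\sup_{t\le T}|R_t^{\eps,\delta}|] \le C\sqrt{\delta}(1+|x_0|) e^{CT}$ is established, substitution into Gronwall and collecting constants yields the stated inequality with $K_{\ref{T:fluctuation_reg_3}}$ depending only on $A,B,K,n$.
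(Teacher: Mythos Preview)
Your overall strategy is correct and will yield the stated bound. However, your decomposition of the sampling residue $R_t^{\eps,\delta}$ differs from the one the paper (implicitly) uses by paralleling Section~\ref{S:Fluct-R12}. The paper works with the \emph{explicit} variation-of-constants solution \eqref{E:state-stoch}: via Lemma~\ref{L:errors} one writes $X_s^{\eps,\delta}-X_{\pi_\delta(s)}^{\eps,\delta}$ in terms of the matrix exponential $e^{(s-\pi_\delta(s))A}-I$, leading to the three pieces $L_1,L_2,L_3$; the averaging step is then Lemma~\ref{L:J_estimate}, which shows $\int_0^\delta\bigl(\tfrac{e^{sA}-I}{\delta}-\tfrac12 A\bigr)\,ds = O(\delta^2)$ by a third-order Taylor expansion. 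In Regime~3 the noise pieces $L_2,L_3$ pick up an extra factor $\tilde\kap(\delta)=\eps/\delta$, and the $G_4$ term disappears since $\delta/\delta=1$. By contrast, you work directly from the integral form of the {\sc sde}, producing the linear sawtooth $(s-\pi_\delta(s))/\delta$ in place of $(e^{(s-\pi_\delta(s))A}-I)/\delta$; the averaging step becomes the trivial identity $\int_0^\delta\bigl(\tfrac{s}{\delta}-\tfrac12\bigr)\,ds=0$, handled by integration by parts against the smooth $x(\cdot)$. The price you pay is the extra ``double integral'' remainder $\tfrac{1}{\delta}\int_{\pi_\delta(s)}^s A(X_r^{\eps,\delta}-X_{\pi_\delta(s)}^{\eps,\delta})\,dr$, which the paper's exact solution absorbs into $L_1$; but as you note, this is easily controlled by a Gronwall bound on the increment over $[k\delta,(k+1)\delta)$ combined with Theorem~\ref{T:flln}. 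Your route is thus slightly more elementary---it avoids the matrix-exponential manipulations of Lemma~\ref{L:J_estimate} and, incidentally, does not use the standing assumption that $A$ is invertible---while the paper's route keeps the analysis closer to the exact solution and avoids the recursive increment estimate. Two minor remarks: the Doob/martingale argument you sketch for the $W$-piece is correct but unnecessary, since the crude bound $\BE\int_0^T|W_s-W_{\pi_\delta(s)}|\,ds\le T\sqrt{n\delta}$ already suffices (giving $\tilde\kap(\delta)\sqrt{\delta}$, well within the target); and your tally for $R_t^{\eps,\delta}$ actually produces $O(\delta(1+|x_0|))$ rather than the $O(\sqrt\delta(1+|x_0|))$ you announce, which is of course even better and still implies the theorem's bound since $\delta\le\sqrt\delta$.
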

Since the proof of Theorem \ref{T:fluctuation_reg_3} very closely parallels that of Theorem \ref{T:fluctuations-R-1-2}, we omit the full details for the sake of brevity. In brief, the main part of the proof consists of showing the convergence, as $\delta \searrow 0,$  of $(1/{\delta})\int_0^t ({X_s^{\varepsilon, \delta}-X_{\pi_\delta(s)}^{\varepsilon, \delta}})\thinspace ds$ to $({1}/{2})\int_0^t(A-BK)x(s)\thinspace ds$ in a suitable sense.

%Let $C([0,T];\BR^n)$ be the space of continuous functions mapping $[0,T]$ to $\BR^n$ equipped with the supremum norm $\|x\| \triangleq \sup_{0 \le t \le T} |x(t)|$ for all $x \in C([0,T];\BR^n)$. This yields a metric $\|x-y\|$ which renders $C([0,T];\BR^n)$ a complete, separable metric (Polish) space \cite{ConvProbMeas,KS91}. The process $\{Z_t^{\eps,\delta}:t \in [0,T]\}$ can be viewed as a random variable taking values in the space $C([0,T];\BR^n)$ with law $\mu^{\eps,\delta}: \Borel(C([0,T];\BR^n)) \to [0,1]$ defined by $\mu^{\eps,\delta}(A) \triangleq \BP(Z^{\eps,\delta} \in A)$ for $A \in \Borel(C([0,T];\BR^n))$. Recall that if $\mu$ is a probability measure on $(C([0,T];\BR^n),\Borel(C([0,T];\BR^n)))$, then the family $\{\mu^{\eps,\delta}:\eps \in (0,1)\}$ is said to \textit{converge weakly} to $\mu$ as $\eps \searrow 0$, denoted $\mu^{\eps,\delta} \Rightarrow \mu$, if for every bounded, continuous $f:C([0,T];\BR^n) \to \BR$, we have $\int f d\mu^{\eps,\delta} \to \int f d\mu$ as $\eps \searrow 0$. If, in this case, $\mu$ is the law of a process $Z=\{Z_t:t \in [0,T]\}$, then we say that $Z^{\eps,\delta}$ \textit{converges in distribution} to $Z$, also denoted $Z^{\eps,\delta} \Rightarrow Z$. 

Before proceeding with the proofs of the above results, we make a simple observation which will be used repeatedly without explicit mention. Let $C([0,\infty);\BR^n)$ denote the space of all continuous functions taking $[0,\infty)$ into $\BR^n$. Then, for $t \ge 0$, $y,z\in C([0,\infty); \BR^n)$, the triangle inequality yields
\begin{equation}\label{E:lipschitz}
\left|\left[Ay(t)-BKy(\pi_\delta(t))\right]-\left[Az(t)-BKz(\pi_\delta(t))\right]\right|\leq (|A|+|BK|)\sup_{0\leq s \leq t}|y(s)-z(s)|.
\end{equation}

\section{Limiting Mean Behavior}\label{S:FLLN}
Here, we prove Theorem \ref{T:flln}. 
%\color{red} \textsf{(In terms of writing things up and organizing the material below, I would suggest the following. Once you've introduced some basic notation, have a Lemma that says something like ``There exists a constant $C>0$ such that [your estimate] holds". Then, in the proof of the lemma, detail the calculations. This way, someone can see at a glance what the end result is going to be. Also, for this constant $C$, make sure that it does NOT include either $\delta$, $\eps$, and if possible, $T$. The reason is that we want to understand how the system behaves as $\eps,\delta \searrow 0$, and perhaps $T \to \infty$; so, these parameters should be written explicitly, rather than being absorbed in some constant. Meanwhile, the matrices $A$, $B$, $K$ are fixed in our problem, and so you should feel free to absorb them and/or their norms into a constant $C$. Also, I would NOT absorb the initial condition into any constants. If you'd like to see an example of how one might do the above, please see statements and proofs of Proposition 2 and Lemma 3.3 in my DCDS-B paper.)} \color{black}

%$$x(t)=x_0+\int_0^t(A-BK)x(s)\thinspace ds.~ \text {Of course, solution is $x(t)=e^{t(A-BK)}x_0.$}$$
% \leq |A| |y(t)-z(t)|+|BK| \left|y\left(\pi_\delta(t)\right)-z\left(\pi_\delta(t)\right)\right|

\begin{proof}[Proof of Theorem \ref{T:flln}]
Since, by the triangle inequality, we have
\begin{equation}\label{E:triangle}
\BE\left[\sup_{0 \le s \le t} |X^{\eps,\delta}_s - x(s)|\right] \le \BE\left[\sup_{0 \le s \le t} |X^{\eps,\delta}_s - x^\delta(s)|\right] + \sup_{0 \le s \le t} |x^\delta(s) - x(s)|,
\end{equation}
we can estimate the two terms on the right individually, and then put the pieces together.
Let $t \in [0,T]$.
Using \eqref{E:ie}, \eqref{E:sie} and \eqref{E:lipschitz}, we easily get 
%\begin{equation*}
%X_t^{\varepsilon,\delta}-x^{\delta}(t)=\int_0^t\left(b^{\delta}(s,X^{\varepsilon,\delta})-b^{\delta}(s,x^{\delta})\right)ds-\varepsilon \int_0^t BK g^{\delta}(s)\thinspace ds+\varepsilon W_t.
%\end{equation*}
%\begin{equation*}
$|X_t^{\varepsilon,\delta}-x^{\delta}(t)|
\le \int_0^t(|A|+|BK|)\sup_{0\leq r\leq s}|X_r^{\varepsilon,\delta}-x^{\delta}(r)|\thinspace ds+\varepsilon |BK|\int_0^t |V_{\pi_\delta(s)}| ds +\varepsilon \sup_{0\le s \le t}|W_s|.$
%\end{equation*}

Since the right-hand side is non-decreasing in $t$, we set $Y_t^{\varepsilon,\delta} \triangleq \sup_{0\le r \le t}|X_r^{\varepsilon,\delta}-x^{\delta}(r)|$, $\|W\|_t^{*} \triangleq \sup_{0\leq s\leq t}|W_s| = \sup_{0\leq s\leq t} \sqrt{\sum_{i=1}^n (W_t^i)^2}$, and obtain 
%\begin{align*}
%\sup_{0\le r \le t}|X_r^{\varepsilon,\delta}-x^{\delta}(r)|&\le \int_0^t(|A|+|BK|)\sup_{0\leq r\leq s}|X_r^{\varepsilon,\delta}-x^{\delta}(r)|\thinspace ds+\varepsilon |BK|G^{\delta}(t)+\varepsilon \sup_{0\le s \le t}|W_s|.
%\end{align*} 
\begin{equation}\label{E:Z_t}
Y_t^{\varepsilon,\delta}\le \int_0^t(|A|+|BK|)Y_s^{\varepsilon,\delta}\thinspace ds+\varepsilon |BK|\int_0^t |V_{\pi_\delta(s)}| ds+\varepsilon \|W\|_t^{*}.
\end{equation}
Noting that $\sum_{i=1}^n\langle W^{i}\rangle_t=nt$, it follows from the Burkholder-Davis-Gundy inequality for vector-valued martingales \cite[Problem 3.3.29]{KS91} that for any $m>0$, there exist universal positive constants $\lambda_m, \Lambda_m$ such that for all $t \ge 0$,
\begin{equation}\label{E:BDG}
\lambda_m(nt)^m\le \BE[(\|W\|_t^{*})^{2m}]\le \Lambda_m(nt)^m.
\end{equation}
Next, we use the fact that $\BE[|V_{s}|] \le \sqrt{\BE[|V_{s}|^2]} = \sqrt{ns}$ for any $s \ge 0$ to get 
%Now, since $G^{\delta}(t)=\int_0^t\sum_{k\ge 0}1_{[k\delta, (k+1)\delta)}(s)|V_{t_k}|ds \le \delta \sum_{k=0}^{\left\lfloor\frac{t}{\delta}\right\rfloor}|V_{t_k}|$, 
\begin{equation}\label{E:G-delta}
\BE\left[\int_0^t |V_{\pi_\delta(s)}| ds\right]\le \delta \sum_{k=0}^{\left\lfloor t/\delta\right\rfloor}\BE[|V_{k\delta}|] \le \delta \sqrt{nt} \left(\left\lfloor\frac{t}{\delta}\right\rfloor+1\right) \le \sqrt{nt}(t+1).
\end{equation}
%$$\BE[G^{\delta}(t)]\le \delta \sum_{k=0}^{\left\lceil\frac{t}{\delta}\right\rceil}\BE[|\xi_k|]=\BE[|\xi_0|]\delta \left\lceil\frac{t}{\delta}\right\rceil \le \BE[|\xi_0|](T+\delta),$$
%\color{red} \textsf{(Some modifications will be needed here since the i.i.d. sequence $\xi_k$ is now being replaced by a BM $V_t$. Also, please see where $\xi_k$'s need to be replaced by $V_{t_k}$ below and make necessary modifications in arguments.)} \color{black}
Taking expectations in \eqref{E:Z_t}, and using \eqref{E:BDG}, \eqref{E:G-delta}, we get\\ %\begin{equation*}
$\BE Y_t^{\varepsilon,\delta} \le (|A|+|BK|)\int_0^t\BE Y_s^{\varepsilon,\delta}\thinspace ds+\varepsilon \left(|BK|\sqrt{nT}(T+1)+\Lambda_{1/2}\sqrt{nT}\right)$.
%\end{equation*}
%\le \int_0^t(|A|+|BK|)\BE Z_s^{\varepsilon,\delta}\thinspace ds+\varepsilon |BK|\BE G^{\delta}(t)+\varepsilon \BE \left[\sup_{0\le s \le t}|W_s|\right]\\
%&\le (|A|+|BK|)\int_0^t\BE Z_s^{\varepsilon,\delta}\thinspace ds+\varepsilon \sqrt{\frac{2}{\pi}} |BK|T^{\frac{3}{2}}+\varepsilon \Lambda_{\frac{1}{2}}(nt)^{\frac{1}{2}}\\
%&
Letting $C_1(n) \triangleq 2(|BK|+\Lambda_{1/2})\sqrt{n}$, we use Gronwall's inequality to get
\begin{equation}\label{E:exp-Stoch-ineq}
\BE\left[\sup_{0 \le s \le t} |X^{\eps,\delta}_s - x^\delta(s)|\right]\le \varepsilon C_1(n) (T^{3/2} + T^{1/2})e^{(|A|+|BK|)t} \qquad \text{ for $t\in [0,T]$}.
\end{equation}
Now, we estimate $\sup_{0\le s\le t}|x^{\delta}(s)-x(s)|$. From \eqref{E:linsys-closedloop} and \eqref{E:ie}, we easily get
%\begin{equation*}
$x^{\delta}(t)-x(t)=\int_0^t A(x^{\delta}(s)-x(s))\thinspace ds-\int_0^t BK \left[x^{\delta}\left(\pi_\delta(s) \right)-x\left(\pi_\delta(s) \right) \right]\thinspace ds-\int_0^t BK \left[x\left(\pi_\delta(s) \right)-x(s)\right]\thinspace ds$. 
%\end{equation*}
%&=\int_0^t A(x^{\delta}(s)-x(s))\thinspace ds+\int_0^t BK\left(x(s)-x^{\delta}\left(\pi_\delta(s)\right)\right)ds\\
Straightforward calculations yield
%\begin{equation*}
%\end{equation*}
\begin{equation}\label{E:det-hyb-ineq}
  \sup_{0\le s \le t}|x^{\delta}(s)-x(s)|\le (|A|+|BK|)\int_0^t \sup_{0\le r \le s}|x^{\delta}(r)-x(r)|\thinspace ds+ |BK|\int_0^t \left|x\left(\pi_\delta(s)\right) - x(s)\right|\thinspace ds.
\end{equation}
Noting that 
$x\left(\pi_\delta(s)\right)=e^{\pi_\delta(s)(A-BK)}x_0$, we get
$x(s)-x\left(\pi_\delta(s)\right)
=e^{\pi_\delta(s)(A-BK)}\{e^{h(A-BK)}-I\}x_0$ where $h=s-\pi_\delta(s) \in [0,\delta)$ and $I$ denotes the $n \times n$ identity matrix. 
%\color{magenta} \textsf{\underline{OK so far.}} \color{black}\\
Since 
$\frac{d}{dt}(e^{t(A-BK)})=(A-BK)e^{t(A-BK)}=e^{t(A-BK)}(A-BK)$, we have $(e^{h(A-BK)}-I)=\int_0^h(A-BK)e^{s(A-BK)}\thinspace ds$ for any $h>0$.
Therefore, 
$\left|x(s)-x\left(\pi_\delta(s)\right)\right| \le e^{\pi_\delta(s)|A-BK|}|e^{h(A-BK)}-I||x_0| \le e^{s|A-BK|}\left\{\int_0^h|A-BK|e^{s|A-BK|}\thinspace ds\right\}|x_0| \le \delta e^{|A-BK|} |A-BK| e^{s|A-BK|} |x_0|$, where we have used the fact that $0 \le h < \delta <1$. Letting $C_2 \triangleq |BK|e^{|A-BK|}$, we get  
\begin{equation*}
|BK|\int_0^t \left|x(s)-x\left(\pi_\delta(s)\right)\right|\thinspace ds \le \delta C_2\left(e^{C_2 T}-1\right)|x_0|. \text{ Now, we use Gronwall's inequality in \eqref{E:det-hyb-ineq} to get}
\end{equation*}
 
\begin{equation}\label{E:exp-det-hyb-ineq}
\sup_{0\le s \le t}|x^{\delta}(s)-x(s)|\le \delta C_2 (e^{T\thinspace C_2}-1)|x_0| e^{(|A|+|BK|)t},\qquad 0\le t\le T.
\end{equation}
If we now let $C(n) \triangleq \max\{C_1(n),C_2,|A|+|BK|\}$, then using the estimates \eqref{E:exp-Stoch-ineq} and \eqref{E:exp-det-hyb-ineq} in \eqref{E:triangle}, we get the stated claim.
\end{proof}

%\newpage
\section{Analysis of fluctuations: Regimes 1 and 2}\label{S:Fluct-R12}
In this section, we prove Theorem \ref{T:fluctuations-R-1-2}. As is evident from equations \eqref{E: cent-limit-eq} and  \eqref{E:lim-fluct-R-1-2}, the central calculations in the proof of Theorem \ref{T:fluctuations-R-1-2} involve showing that, in a suitable sense, we have 
\begin{equation}\label{E:ell}
\lim_{\substack{\eps,\delta \searrow 0\\ \delta/\eps \to \cc}} (1/{\eps}) \int_0^t ({X_s^{\varepsilon, \delta}-X_{\pi_\delta(s)}^{\varepsilon, \delta}})\thinspace ds =\ell(t) \quad \text{where} \quad 
\ell(t) \triangleq ({\cc}/{2}) \int_0^t (A-BK) x(s) \thinspace ds = ({\cc}/{2}) \int_0^t \dot{x}(s) \thinspace ds. 
%= \frac{\cc}{2} \left(e^{t(A-BK)}-I\right)x_0.
\end{equation} 
This section is organized as follows. We start with Proposition \ref{P:key-estimate-R-1-2}, which is the key to proving Theorem \ref{T:fluctuations-R-1-2}. Indeed, the estimates in Proposition \ref{P:key-estimate-R-1-2} quantify the error in replacing $(1/{\eps})\int_0^t ({X_s^{\varepsilon, \delta}-X_{\pi_\delta(s)}^{\varepsilon, \delta}})\thinspace ds$ by $\ell(t)$, and codify the precise sense in which \eqref{E:ell} holds. To build up to the proof of Proposition \ref{P:key-estimate-R-1-2}, we next work through a series of lemmas. 
After putting together the proof of Proposition \ref{P:key-estimate-R-1-2}, we close out the section with the proof of  
Theorem \ref{T:fluctuations-R-1-2}.

\begin{pro}\label{P:key-estimate-R-1-2}
Suppose that we are in Regime $i \in \{1,2\}$, i.e., $\lim_{\eps \searrow 0}\delta_\eps/\eps = \cc \in [0,\infty)$. Recall $\eps_0 \in (0,1)$ defined in \eqref{E:eps0}.
Then, there exists a constant $K_{\ref{P:key-estimate-R-1-2}}>0$ depending only on $A,B,K$ and $n$ such that for any $T>0$, $0<\eps<\eps_0$, we have
\begin{equation}\label{E:key-estimate-R-1-2}
\BE \left[\sup_{0 \le t \le T} \left|\int_0^t \frac{{X_s^{\varepsilon, \delta}-X_{\pi_\delta(s)}^{\varepsilon, \delta}}}{\eps}\thinspace ds - 
\ell(t) \right|\right] \le (\cc+1)^2 K_{\ref{P:key-estimate-R-1-2}} e^{K_{\ref{P:key-estimate-R-1-2}} T} \left[\sqrt{\eps}(|x_0| +1 + T) + \kap(\eps)|x_0|\right],
\end{equation}
where $\kap(\eps)$ defined in \eqref{E:kappa} satisfies $\lim_{\eps \searrow 0}\kap(\eps)=0$.
\end{pro}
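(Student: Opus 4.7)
The strategy is to expand $X_s^{\varepsilon,\delta}-X_{\pi_\delta(s)}^{\varepsilon,\delta}$ using the local stochastic dynamics on the single sampling interval $[\pi_\delta(s),s]$, and then isolate the piece responsible for the non-trivial limit $\ell(t)$. Applying \eqref{E:sie} between $\pi_\delta(s)$ and $s$ and adding and subtracting $A X_{\pi_\delta(s)}^{\varepsilon,\delta}$ inside the drift yields
\begin{equation*}
X_s^{\varepsilon,\delta}-X_{\pi_\delta(s)}^{\varepsilon,\delta}=(s-\pi_\delta(s))(A-BK)X_{\pi_\delta(s)}^{\varepsilon,\delta}+\int_{\pi_\delta(s)}^s A(X_r^{\varepsilon,\delta}-X_{\pi_\delta(s)}^{\varepsilon,\delta})\,dr-\varepsilon(s-\pi_\delta(s))BK V_{\pi_\delta(s)}+\varepsilon(W_s-W_{\pi_\delta(s)}).
\end{equation*}
Dividing by $\varepsilon$ and integrating over $s\in[0,t]$ produces four contributions $I_1(t),I_2(t),I_3(t),I_4(t)$, only the first of which converges to $\ell(t)$. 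The plan is to show, after taking $\BE\sup_{t\le T}$, that each of $I_1(t)-\ell(t)$, $I_2(t)$, $I_3(t)$, $I_4(t)$ is bounded by the right-hand side of \eqref{E:key-estimate-R-1-2}.

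For $I_1(t)=\varepsilon^{-1}\!\int_0^t(s-\pi_\delta(s))(A-BK)X_{\pi_\delta(s)}^{\varepsilon,\delta}\,ds$, I would first replace $X_{\pi_\delta(s)}^{\varepsilon,\delta}$ by $x(s)$, paying an error controlled by $(\delta/\varepsilon)|A-BK|\int_0^T\BE|X_{\pi_\delta(s)}^{\varepsilon,\delta}-x(s)|\,ds$; Theorem \ref{T:flln} together with the bound $|x(s)-x(\pi_\delta(s))|\le C\delta|x_0|e^{CT}$ coming from $\dot x=(A-BK)x$ handles this. I would then write the main part of $I_1$ as $(\delta/\varepsilon)\!\int_0^t[(s-\pi_\delta(s))/\delta](A-BK)x(s)\,ds$ and invoke the elementary periodic-averaging estimate
\begin{equation*}
\left|\int_0^t\left[\frac{s-\pi_\delta(s)}{\delta}-\frac{1}{2}\right]f(s)\,ds\right|\le C\delta(1+T)\Bigl(\sup_{[0,T]}|f|+\sup_{[0,T]}|f'|\Bigr),\qquad f\in C^1([0,T]),
\end{equation*}
which follows piecewise from $\int_0^\delta[u/\delta-1/2]\,du=0$ and a first-order Taylor expansion of $f$ on each sampling interval, applied to $f(s)=(A-BK)x(s)$ (whose sup and derivative sup on $[0,T]$ are both bounded by $C|x_0|e^{CT}$). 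This reduces $I_1$ to $(\delta/(2\varepsilon))\!\int_0^t(A-BK)x(s)\,ds$, and the splitting $\delta/\varepsilon=\cc+(\delta/\varepsilon-\cc)$ together with \eqref{E:kappa} produces $\ell(t)$ up to an error of order $\kap(\varepsilon)|x_0|T$.

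For the remaining three terms: $I_2$ is estimated by a second application of the local {\sc sde} giving $\BE|X_r^{\varepsilon,\delta}-X_{\pi_\delta(s)}^{\varepsilon,\delta}|\le C(\delta(1+|x_0|)+\varepsilon\sqrt{\delta}+\varepsilon\delta\sqrt{T})$ for $r\in[\pi_\delta(s),s]$, using the a priori bound $\BE|X_u^{\varepsilon,\delta}|\le C(1+|x_0|)$ inherited from Theorem \ref{T:flln}; this produces an overall contribution of order $(\delta/\varepsilon)\delta T(1+|x_0|)$ plus lower-order noise pieces. $I_3$ is dominated pointwise by $\delta|BK|\int_0^T|V_{\pi_\delta(s)}|\,ds$, whose expectation is $O(\delta T^{3/2})$ by $\BE|V_s|\le\sqrt{ns}$ as already used in the proof of Theorem \ref{T:flln}. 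For $I_4(t)=\int_0^t(W_s-W_{\pi_\delta(s)})\,ds$, at sampling times $k\delta$ one has $I_4(k\delta)=\sum_{j<k}M_j$ with $M_j\triangleq\int_{j\delta}^{(j+1)\delta}(W_s-W_{j\delta})\,ds$ an iid sequence of mean-zero normals satisfying $\BE|M_j|^2=n\delta^3/3$, so Doob's maximal inequality applied to the discrete martingale $\sum M_j$ gives $\BE\sup_k|I_4(k\delta)|=O(\delta\sqrt{T})$, and the between-sample fluctuation $|I_4(t)-I_4(\pi_\delta(t))|\le 2\delta\|W\|_T^\ast$ adds another $O(\delta\sqrt{T})$ after one more use of \eqref{E:BDG}.

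The chief obstacle is bookkeeping: each error must be re-expressed in the specific form $\sqrt{\varepsilon}(|x_0|+1+T)+\kap(\varepsilon)|x_0|$ up to the prefactor $(\cc+1)^2K_{\ref{P:key-estimate-R-1-2}}e^{K_{\ref{P:key-estimate-R-1-2}}T}$. Here the two-parameter assumption is crucial: \eqref{E:eps0} gives both $\delta/\varepsilon\le\cc+1$ and, since $\varepsilon<\varepsilon_0<1$, the upgrade $\delta\le(\cc+1)\sqrt{\varepsilon}$, which systematically converts every power of $\delta$ in the error terms into a power of $\sqrt{\varepsilon}$ at the cost of one factor of $(\cc+1)$. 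The two places where $\delta/\varepsilon$ enters multiplicatively---once when swapping $X_{\pi_\delta(s)}^{\varepsilon,\delta}\to x(s)$ inside $I_1$, and once when bounding $I_2$---produce the $(\cc+1)^2$ factor in the statement, confirming that Theorem \ref{T:flln} is exactly sharp enough for the present estimate.
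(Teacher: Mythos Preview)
Your argument is correct and complete as a sketch, but it follows a genuinely different route from the paper's own proof. The paper exploits the closed-form variation-of-constants formula \eqref{E:state-stoch} to write (Lemma~\ref{L:errors})
\[
X_t^{\eps,\delta}-X_{\pi_\delta(t)}^{\eps,\delta}=\bigl[e^{(t-\pi_\delta(t))A}-I\bigr]\bigl[I-A^{-1}BK\bigr]X_{\pi_\delta(t)}^{\eps,\delta}+\eps e^{tA}\bigl(M_t-M_{\pi_\delta(t)}\bigr)-\eps\bigl(e^{(t-\pi_\delta(t))A}-I\bigr)A^{-1}BKV_{\pi_\delta(t)},
\]
producing three pieces $L_1^{\eps,\delta},L_2^{\eps,\delta},L_3^{\eps,\delta}$; the matrix exponential is then Taylor-expanded (Lemma~\ref{L:J_estimate}) and each piece is estimated separately. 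Your decomposition instead works directly from the integral equation \eqref{E:sie}, which amounts to stopping at the first-order term $hA$ of $e^{hA}-I$ and carrying the remainder as your $I_2$. The two schemes are related: your $I_1$ corresponds to the leading part of the paper's $L_1^{\eps,\delta}$, your $I_3,I_4$ to the leading parts of $L_3^{\eps,\delta},L_2^{\eps,\delta}$, and your $I_2$ absorbs all the higher-order corrections that the paper handles via the exact exponential.

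Two features of your route are worth noting. First, it never uses the invertibility of $A$, whereas the paper's decomposition explicitly relies on $A^{-1}$. Second, by treating $I_4(t)=\int_0^t(W_s-W_{\pi_\delta(s)})\,ds$ as a discrete martingale and invoking Doob's inequality, you exploit cancellation and obtain a bound of order $\delta\sim(\cc+1)\eps$; the paper instead bounds the analogous term $L_2^{\eps,\delta}$ via $\int_0^T\BE|W_s-W_{\pi_\delta(s)}|\,ds\le T\sqrt{n\delta}$, which is where the $\sqrt{\eps}$ in \eqref{E:key-estimate-R-1-2} originates. Your estimate is therefore nominally sharper on the noise term, though of course the stated Proposition only requires the $\sqrt{\eps}$ rate. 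What the paper's approach buys is a cleaner three-term decomposition with no nested-integral remainder like $I_2$ to control.
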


\noindent
To start working our way up to the proof of Proposition \ref{P:key-estimate-R-1-2}, we next state and prove Lemma \ref{L:errors}, which uses \eqref{E:state-stoch} to explicitly compute $(1/{\eps})\int_0^t ({X_s^{\varepsilon, \delta}-X_{\pi_\delta(s)}^{\varepsilon, \delta}})\thinspace ds$; this enables us to express $(1/{\eps})\int_0^t ({X_s^{\varepsilon, \delta}-X_{\pi_\delta(s)}^{\varepsilon, \delta}})\thinspace ds - \ell(t)$ as a sum of three terms, which are subsequently estimated in Lemmas \ref{L:errors} through \ref{L:L_Stoch_2-R-1-2}.  
To simplify some of the notation, we let $M=\{M_t: 0 \le t < \infty\}$ be the process defined by
\begin{equation}\label{E:mart}
M_t \triangleq \int_0^t e^{-sA}\thinspace dW_s =e^{-tA}W_t+\int_0^t e^{-sA}AW_s \thinspace ds,
\end{equation}
where the latter follows from the integration by parts formula.

\begin{lem}\label{L:errors}
For $\eps,\delta \in (0,1)$, $t \ge 0$, we have $(1/{\eps})\int_0^t ({X_s^{\varepsilon, \delta}-X_{\pi_\delta(s)}^{\varepsilon, \delta}})\thinspace ds = \sum_{i=1}^3 L^{\eps,\delta}_i(t)$ where
\begin{equation}\label{E:errors}
\begin{aligned}
L^{\eps,\delta}_1(t) &= (1/{\eps})\int_0^t \left[{e^{{(s-\pi_\delta(s))}A}-I}\right] \left[I-A^{-1}BK\right]X_{\pi_\delta(s)}^{\eps,\delta} \thinspace ds,\\
L^{\eps,\delta}_2(t) &= \int_0^t e^{sA}\left(M_s-M_{\pi_\delta(s)}\right) \thinspace ds, \\
L^{\eps,\delta}_3(t) &= -\int_0^t \left(e^{(s-\pi_\delta(s))A} - I \right)A^{-1}BKV_{\pi_\delta(s)} \thinspace ds.
\end{aligned}
\end{equation}
\end{lem}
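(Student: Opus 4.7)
The plan is a direct computation: write down the increment $X_s^{\eps,\delta}-X_{\pi_\delta(s)}^{\eps,\delta}$ pointwise on each sampling interval using the explicit representation \eqref{E:state-stoch}, simplify with the invertibility of $A$ and the definition \eqref{E:mart} of $M$, and then integrate from $0$ to $t$ to read off the three pieces $L^{\eps,\delta}_1,L^{\eps,\delta}_2,L^{\eps,\delta}_3$.

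In more detail, fix $s\in[k\delta,(k+1)\delta)$ so that $\pi_\delta(s)=k\delta$ and set $h\triangleq s-\pi_\delta(s)\in[0,\delta)$. Applying \eqref{E:state-stoch} at time $s$ and at time $k\delta$ (using $X^{\eps,\delta}_{k\delta}=X^{\eps,\delta}_{\pi_\delta(s)}$) and subtracting, I get
\begin{equation*}
X_s^{\eps,\delta}-X_{\pi_\delta(s)}^{\eps,\delta}
=\bigl(e^{hA}-I\bigr)X_{\pi_\delta(s)}^{\eps,\delta}
-\Bigl[\int_{\pi_\delta(s)}^{s}e^{(s-u)A}BK\,du\Bigr]X_{\pi_\delta(s)}^{\eps,\delta}
+\eps\int_{\pi_\delta(s)}^{s}e^{(s-u)A}\,dW_u
-\eps\Bigl[\int_{\pi_\delta(s)}^{s}e^{(s-u)A}\,du\Bigr]BKV_{\pi_\delta(s)}.
\end{equation*}
Next I would simplify using $A$'s invertibility and the fact that $A^{-1}$ commutes with $e^{hA}$: the change of variable $v=s-u$ gives $\int_{\pi_\delta(s)}^{s}e^{(s-u)A}\,du=\int_0^h e^{vA}\,dv=A^{-1}(e^{hA}-I)$, so the second and fourth terms become $-A^{-1}(e^{hA}-I)BK X_{\pi_\delta(s)}^{\eps,\delta}$ and $-\eps A^{-1}(e^{hA}-I)BKV_{\pi_\delta(s)}$ respectively. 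Combining the first and second terms yields $(e^{hA}-I)(I-A^{-1}BK)X_{\pi_\delta(s)}^{\eps,\delta}$.

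For the stochastic part, pulling out $e^{sA}$ and recognizing the inner integral as an increment of $M$ gives $\int_{\pi_\delta(s)}^{s}e^{(s-u)A}\,dW_u=e^{sA}\int_{\pi_\delta(s)}^{s}e^{-uA}\,dW_u=e^{sA}\bigl(M_s-M_{\pi_\delta(s)}\bigr)$. Putting the pieces together yields the pointwise identity
\begin{equation*}
\frac{X_s^{\eps,\delta}-X_{\pi_\delta(s)}^{\eps,\delta}}{\eps}
=\frac{1}{\eps}\bigl(e^{hA}-I\bigr)(I-A^{-1}BK)X_{\pi_\delta(s)}^{\eps,\delta}
+e^{sA}\bigl(M_s-M_{\pi_\delta(s)}\bigr)
-\bigl(e^{hA}-I\bigr)A^{-1}BKV_{\pi_\delta(s)},
\end{equation*}
valid for every $s\ge 0$ (outside a set of Lebesgue measure zero, which is irrelevant after integration). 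Integrating over $[0,t]$ gives exactly the decomposition in \eqref{E:errors}.

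There is no real obstacle here: the argument is purely algebraic once one has the explicit formula \eqref{E:state-stoch} and the invertibility of $A$. The only bookkeeping points to watch are (i) using continuity of the sample paths to identify $X^{\eps,\delta}_{k\delta}$ with $X^{\eps,\delta}_{k\delta-}=X^{\eps,\delta}_{\pi_\delta(s)}$ so that both sides of the subtraction are written in terms of the same initial value, and (ii) using the commutativity of $A^{-1}$ with $e^{hA}$ to rearrange the third term into the form stated for $L^{\eps,\delta}_3$. The heavier estimates on the three pieces are then handled separately in the lemmas that follow.
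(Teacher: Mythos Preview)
Your proposal is correct and follows essentially the same route as the paper: both use the explicit representation \eqref{E:state-stoch} to write the increment $X_s^{\eps,\delta}-X_{\pi_\delta(s)}^{\eps,\delta}$ on a sampling interval, evaluate the convolution integral via the invertibility of $A$ (and its commutation with $e^{hA}$), recognize the stochastic piece as $e^{sA}(M_s-M_{\pi_\delta(s)})$, and then integrate. The paper's argument is exactly this, arriving at the same pointwise identity \eqref{E:sol_diff} before integrating.
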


\begin{proof}[Proof of Lemma \ref{L:errors}]
%\begin{pf}
Recalling \eqref{E:state-stoch}, it follows that for $t\in [k\delta,(k+1)\delta),~k\in \BZ^+,$ we have
$X^{\eps,\delta}_t - X^{\eps,\delta}_{k\delta-}=\\ \left[ e^{(t-k\delta)A} - e^{tA}\int_{k\delta}^t e^{-sA} BK \thinspace ds - I\right] X^{\eps,\delta}_{k\delta-} + \eps e^{tA}\int_{k\delta}^t e^{-sA} dW_s - \eps e^{tA}\int_{k\delta}^t e^{-sA} BK V_{k\delta} \thinspace ds.$
%Consider $t_k=\delta \lfloor \frac{t}{\delta}\rfloor.$
%\begin{align*}
%X_t^{\varepsilon,\delta}&=e^{(t-t_k)A}\left[X_{t_{k}-}^{\varepsilon,\delta}+\int_{t_k}^t e^{-(s-t_k)A}(-BK)X_{t_k}^{\varepsilon,\delta} \thinspace ds+ \varepsilon \int_{t_k}^t e^{-(s-t_k)A}\thinspace dW_s -\varepsilon \int_{t_k}^t e^{-(s-t_k)A}BK V_{t_k} \thinspace ds\right]\\
%&=\left[e^{(t-t_k)A}-e^{tA}\int_{t_k}^t e^{-sA}BK \thinspace ds\right]X_{t_{k}-}^{\varepsilon,\delta}+\varepsilon e^{tA}\int_{t_k}^t e^{-sA} \thinspace dW_s-\varepsilon e^{tA} \int_{t_k}^t e^{-sA}BK V_{t_k}\thinspace ds.\\
%X_t^{\varepsilon,\delta}-X_{t_k}^{\varepsilon,\delta}&=\left[e^{(t-t_k)A}-e^{tA}\int_{t_k}^t e^{-sA}BK \thinspace ds-I\right]X_{t_{k}-}^{\varepsilon,\delta}+\varepsilon e^{tA}\int_{t_k}^t e^{-sA} \thinspace dW_s-\varepsilon e^{tA} \int_{t_k}^t e^{-sA}BK V_{t_k}\thinspace ds.
%\end{align*}
Since $A$ is invertible, we have $\frac{d}{dt}(e^{-tA}A^{-1} )=-e^{-tA}$. This implies that $\int_{k\delta}^t e^{-sA} ds = -\left(e^{-tA} - e^{-k\delta A}\right) A^{-1}$. We now easily compute that 
%\begin{equation*}
$X_t^{\varepsilon,\delta}-X_{k\delta-}^{\eps,\delta} =\left[e^{(t-k\delta)A}-I\right] \left[ I - A^{-1}BK \right] X_{k\delta-}^{\eps,\delta}+\varepsilon e^{tA}\int_{k\delta}^t e^{-sA}\thinspace dW_s-\varepsilon e^{tA} \int_{k\delta}^t e^{-sA}BK V_{k\delta}\thinspace ds.$
%\end{equation*}
Thus, for $t \ge 0$, we have

\begin{equation}\label{E:sol_diff}
X_t^{\eps,\delta}-X_{\pi_\delta(t)}^{\varepsilon,\delta}=\left[e^{{(t-\pi_\delta(t))}A}-I\right] \left[I-A^{-1}BK\right]X_{\pi_\delta(t)}^{\eps,\delta}+\varepsilon e^{tA}\left(M_t-M_{\pi_\delta(t)}\right)-\eps \left(e^{(t-\pi_\delta(t))A} - I \right)A^{-1}BKV_{\pi_\delta(t)},
\end{equation}
where $M_t$ is as in \eqref{E:mart}. Recalling \eqref{E: cent-limit-eq}, the claim now follows.
\end{proof}
%\end{pf}

Evidently, one can estimate $(1/{\eps})\int_0^t ({X_s^{\varepsilon, \delta}-X_{\pi_\delta(s)}^{\varepsilon, \delta}})\thinspace ds - \ell(t)$ by separately estimating $|L^{\eps,\delta}_1(t)-\ell(t)|$, $|L^{\eps,\delta}_2(t)|$, $|L^{\eps,\delta}_3(t)|$, and then putting the pieces together. This will be accomplished in Lemmas \ref{L:L_Stoch_1-R-1-2}, \ref{L:L_det-R-1-2}, and \ref{L:L_Stoch_2-R-1-2}, respectively. But first, we state and prove an auxiliary estimate in Lemma \ref{L:J_estimate} which is central to the proof of Lemma \ref{L:L_Stoch_1-R-1-2}.

\begin{lem}\label{L:J_estimate}
Let
\begin{equation}\label{E:J-delta-def} 
J^\delta(t) \triangleq \int_0^t\left(\frac{e^{(s-\pi_{\delta}(s))A}-I}{\delta}-\frac{1}{2}A\right)(I-A^{-1}BK)\thinspace x(\pi_{\delta}(s)) \thinspace ds.
\end{equation}
There exists a constant $C_{\ref{L:J_estimate}}>0$ depending on $A$, $B$, $K$ such that 
\begin{equation*}
\sup_{0\le t\le T}|J^\delta(t) |\le \delta C_{\ref{L:J_estimate}} (1+T) e^{C_{\ref{L:J_estimate}}T}.
%\delta (\delta+1)C_{\ref{L:J_estimate}}\thinspace e^{C_{\ref{L:J_estimate}}T} 
\end{equation*}
\end{lem}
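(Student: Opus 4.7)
The guiding observation is that the subtracted $A/2$ is precisely the leading-order contribution obtained when one averages $(e^{hA}-I)/\delta$ over $h\in[0,\delta)$; one therefore expects a genuine $O(\delta^2)$ cancellation once the integral is evaluated on a full sampling subinterval. The plan is to exploit the piecewise-constant structure of $x(\pi_{\delta}(\cdot))$ to partition the integral along the sampling grid $\{k\delta\}$ and extract this cancellation interval by interval.

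Concretely, I would set $N\triangleq\lfloor t/\delta\rfloor$, write $G^\delta(s)\triangleq(e^{(s-\pi_\delta(s))A}-I)/\delta - A/2$, and decompose
\[
J^\delta(t) = \sum_{k=0}^{N-1} \mathcal{I}^\delta\, (I-A^{-1}BK)\, x(k\delta) + \int_{N\delta}^{t} G^\delta(s)(I-A^{-1}BK)\, x(N\delta)\,ds,
\]
using that $x(\pi_\delta(\cdot))=x(k\delta)$ on $[k\delta,(k+1)\delta)$, where $\mathcal{I}^\delta\triangleq\int_{k\delta}^{(k+1)\delta}G^\delta(s)\,ds$ is $k$-independent. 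The substitution $h=s-k\delta$ together with the identity $\int_0^\delta e^{hA}\,dh=(e^{\delta A}-I)A^{-1}$ (valid since $A$ is invertible) then yields the closed form
\[
\mathcal{I}^\delta \;=\; \frac{(e^{\delta A}-I)A^{-1}}{\delta}\;-\;I\;-\;\frac{\delta A}{2}.
\]
Taylor expanding $e^{\delta A}-I-\delta A-\delta^2 A^2/2=\sum_{n\geq 3}\delta^n A^n/n!$ identifies $\mathcal{I}^\delta = \delta^{-1}\bigl(\sum_{n\geq 3}\delta^n A^n/n!\bigr)A^{-1}$, whose operator norm is bounded by $\delta^2|A|^3|A^{-1}|e^{|A|}/6$ for $\delta\in(0,1)$. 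This is the crucial $O(\delta^2)$ per-interval cancellation.

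To finish, I would combine $|\mathcal{I}^\delta|=O(\delta^2)$ with the a priori bound $|x(k\delta)|\le e^{T|A-BK|}|x_0|$ and sum over the at most $T/\delta$ full subintervals to get an $O(\delta T)$ contribution; for the single partial subinterval of length less than $\delta$, I would bound $|G^\delta(s)|$ uniformly by a constant depending on $|A|$ (valid since $\delta<1$), so that this piece contributes $O(\delta)$. Together this yields an estimate of the form $\delta C(1+T)e^{CT}$, with $C$ depending on $A,B,K$ (absorbing $|x_0|$).

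The main obstacle is isolating the $O(\delta^2)$ cancellation in $\mathcal{I}^\delta$. A naive bound $|(e^{hA}-I)/\delta|=O(1)$ paired with an interval length of $\delta$ would produce only $O(T)$ after summing, which is far too weak. The sharper rate genuinely requires the specific $-A/2$ correction together with the Taylor expansion of $(e^{\delta A}-I)A^{-1}/\delta$ to second order in $\delta A$; without exploiting this precise structure, the bound cannot be closed.
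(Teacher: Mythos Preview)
Your proposal is correct and follows essentially the same route as the paper: split $J^\delta(t)$ along the sampling grid into a sum over full subintervals plus a single partial one, exploit that the full-interval contribution $\mathcal{I}^\delta=\delta^{-1}(e^{\delta A}-I)A^{-1}-I-\tfrac{\delta}{2}A$ is $O(\delta^2)$ by Taylor expansion (the paper writes the same remainder via the equivalent triple-integral identity $e^{rA}-I-rA-\tfrac{1}{2}r^2A^2=A^3\int_0^r\!\int_0^s\!\int_0^v e^{pA}\,dp\,dv\,ds$), and bound the partial subinterval by $O(\delta)$. The only cosmetic difference is that the paper handles the partial interval by reusing the same closed-form integral with $r<\delta$, whereas you bound the integrand $G^\delta$ uniformly; both yield the needed $O(\delta)$, and the paper likewise absorbs $|x_0|$ into the constant.
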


%\begin{proof}[Proof of Lemma \ref{L:J_estimate}]
\begin{proof}
Letting 
%\begin{equation*}
$g^\delta(s) \triangleq \left(\frac{e^{sA}-I}{\delta}-\frac{1}{2}A\right)(I-A^{-1}BK),$
%\end{equation*}
it is easily seen that
\begin{equation}\label{E:J_Estimate}
J^\delta(t) = I_1+I_2, \quad \text{where} \quad I_1 \triangleq  \left(\int_0^{\delta} g^\delta(s) \thinspace ds\right) \sum_{k=0}^{\lfloor\frac{t}{\delta}\rfloor-1} x(k\delta) \thinspace , \quad I_2 \triangleq \left(\int_{0}^{t-\pi_\delta(t)}g^\delta(s) \thinspace \thinspace ds \right) x\left(\pi_\delta(t) \right).
\end{equation}
For $0<r \le \delta$, a direct calculation yields 
\begin{equation}\label{E:gdelta-intformula}
\begin{aligned}
\int_0^r g^\delta(s) \thinspace ds &= \frac{1}{\delta} A^{-1} \left( e^{rA} - I - rA - \frac{1}{2}\delta r A^2 \right) \left(I-A^{-1}BK\right)\\
&= \frac{1}{\delta} A^{-1} \left(A^3 \int_0^r \int_0^s \int_0^v e^{pA} \thinspace dp \thinspace dv \thinspace ds +\frac{1}{2}r(r-\delta)A^2\right) \left(I-A^{-1}BK\right)\\
%&= \frac{1}{\delta} A^{-1} \left(A^2 \int_0^r \int_0^s e^{vA} \thinspace dv  \thinspace ds - \frac{1}{2}\delta r A^2 \right) \left(I-A^{-1}BK\right)
\left|\int_0^r g^{\delta}(s)\thinspace ds\right| &\le \frac{1}{\delta}\left(|A|^2 \int_0^r \int_0^s \int_0^v e^{r|A|} \thinspace dp \thinspace dv \thinspace ds +\frac{1}{2}r|r-\delta||A| \right)|I-A^{-1}BK|\\
&=\frac{1}{\delta}\left(|A|^2 e^{r|A|}\frac{r^3}{6}+\frac{1}{2}r|r-\delta||A|\right)|I-A^{-1}BK|\\
&\le \left(\frac{r^2}{6}|A|^2 e^{|A|}+\frac{1}{2}|r-\delta||A|\right)|I-A^{-1}BK|,
\end{aligned}
\end{equation}
where the last inequality follows from $r\le \delta<1.$ In case of $I_1$ when $r=\delta,$ we get $\left|\int_0^{\delta} g^{\delta}(s)\thinspace ds\right|\le \frac{\delta^2}{6}|A|^2e^{|A|}|I-A^{-1}BK|$ and in case of $I_2$ when $r=t-\pi_\delta(t), $ we get $|\int_0^{t-\pi_\delta(t)}g^{\delta}(s)\thinspace ds|\le \left(\frac{\delta^2}{6}|A|^2e^{|A|}+\frac{1}{2}\delta |A|\right)|I-A^{-1}BK|$.
Therefore, from \eqref{E:J_Estimate}, we get
\begin{align*}
|J^{\delta}(t)|&\le |I_1|+|I_2|\\
&\le \frac{\delta}{6}|A|^2e^{|A|}|I-A^{-1}BK|\left(\delta \left|\sum_{k=0}^{\lfloor\frac{t}{\delta}\rfloor-1} x(k\delta)\right|\right)+\left(\frac{\delta^2}{6}|A|^2e^{|A|}+\frac{1}{2}\delta |A|\right)|I-A^{-1}BK||x(\pi_\delta(t))|\\
&\le \frac{\delta}{6}|A|^2e^{|A|}|I-A^{-1}BK|\int_0^{\delta\lfloor\frac{t}{\delta}\rfloor}|x(\pi_{\delta}(s))|\thinspace ds+\left(\frac{\delta^2}{6}|A|^2e^{|A|}+\frac{1}{2}\delta |A|\right)|I-A^{-1}BK|\sup_{0\le t \le T}|x(t)|\\
&\le \frac{\delta}{6}|A|^2e^{|A|}|I-A^{-1}BK|T \sup_{0 \le t \le T}|x(t)|+ \left(\frac{\delta^2}{6}|A|^2e^{|A|}+\frac{1}{2}\delta |A|\right)|I-A^{-1}BK|\sup_{0\le t \le T}|x(t)|\\
&= \delta \left[\frac{1}{6}|A|e^{|A|}T+\frac{\delta}{6}|A|e^{|A|}+\frac{1}{2}\right]|A||(I-A^{-1}BK)|\sup_{0\le t \le T}|x(t)|.
%\le \delta(\delta+1)C_{\ref{L:J_estimate}}e^{C_{\ref{L:J_estimate}}T}.
\end{align*}
Recalling that $x(t)=e^{t(A-BK)}x_0$, the claim easily follows.
\end{proof}

%In view of Lemma \ref{L:errors} above, the proof of Proposition \ref{P:key-estimate-R-1-2} can be broken into the following three lemmas, whose proofs will be provided in Section \ref{SS:Proofs-of-Lemmas-R-1-2} below. \color{blue} (\textsf{For convenience in proof-reading, the proofs are given with the lemmas. Rearrangement later.}) \color{black}

\begin{lem}\label{L:L_Stoch_1-R-1-2}
There exists a constant $D_{\ref{L:L_Stoch_1-R-1-2}}>0$ depending only on $A$, $B$, $K$ and $n$ such that for $0 < \eps < \eps_0$, we have 
\begin{equation}\label{E:L1-estimate}
\BE\left[\sup_{0 \le t \le T} |L^{\eps,\delta}_1(t)-\ell(t)|\right] \le (\cc+1)^2 D_{\ref{L:L_Stoch_1-R-1-2}} e^{D_{\ref{L:L_Stoch_1-R-1-2}}T} \left[\eps \left\{|x_0| + 1 + T\right\} + \kap(\eps)|x_0|\right].
%[\eps(\sqrt{T}+|x_0|)+\eps(\eps+1)+\eps|x_0|+|\frac{\delta}{\eps}-c||x_0|]\tilde{K_5}e^{\tilde{K_5}T}, \thinspace here
\end{equation}
%$\tilde{K_5}$ depends on $A,B,K$ and $n.$
\end{lem}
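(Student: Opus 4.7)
The plan is to decompose $L_1^{\eps,\delta}(t) - \ell(t)$ into four terms whose errors come from qualitatively distinct sources, and to estimate each separately. Using the algebraic identity $A(I-A^{-1}BK) = A-BK$ together with the definition \eqref{E:J-delta-def} of $J^\delta$, I would write
\begin{align*}
L_1^{\eps,\delta}(t) - \ell(t)
&= \frac{1}{\eps}\int_0^t (e^{(s-\pi_\delta(s))A}-I)(I-A^{-1}BK)\bigl[X^{\eps,\delta}_{\pi_\delta(s)}-x(\pi_\delta(s))\bigr]\,ds \\
&\quad + \frac{\delta}{\eps}\,J^\delta(t) + \frac{\delta}{2\eps}\int_0^t (A-BK)\bigl[x(\pi_\delta(s))-x(s)\bigr]\,ds \\
&\quad + \frac{1}{2}\left(\frac{\delta}{\eps}-\cc\right)\int_0^t (A-BK)x(s)\,ds.
\end{align*}
These four terms correspond respectively to (i) replacing the true process $X^{\eps,\delta}_{\pi_\delta(s)}$ by the zeroth-order profile $x(\pi_\delta(s))$, (ii) replacing $(e^{hA}-I)/\delta$ by its per-period average $\tfrac{1}{2}A$, (iii) replacing $x(\pi_\delta(s))$ by $x(s)$, and (iv) replacing $\delta/\eps$ by its limit $\cc$.

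For (i) I would exploit the elementary bound $|e^{(s-\pi_\delta(s))A}-I|\le \delta|A|e^{|A|}$ (valid since $s-\pi_\delta(s)\in[0,\delta)\subset[0,1)$) to pull a factor of $\delta/\eps\le \cc+1$ out of the integral, take expectation, and apply Theorem \ref{T:flln}; the $\delta e^{TC}|x_0|$ piece in Theorem \ref{T:flln} is then converted into an $\eps|x_0|$ piece by the bound $\delta\le(\cc+1)\eps$ from \eqref{E:eps0}, producing the second factor of $(\cc+1)$. For (ii) I would invoke Lemma \ref{L:J_estimate} directly, using $\delta/\eps\le \cc+1$ and $\delta\le(\cc+1)\eps$ once more. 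For (iii) I would reuse the pointwise bound $|x(\pi_\delta(s))-x(s)|\le \delta|A-BK|e^{|A-BK|}e^{s|A-BK|}|x_0|$ already established in the proof of Theorem \ref{T:flln}, combined with the prefactor $\delta/\eps\le \cc+1$. For (iv) I would use $x(s)=e^{s(A-BK)}x_0$ to bound the integral by $e^{T|A-BK|}|x_0|$, and read off the factor $\tfrac{1}{2}\kap(\eps)$ from the definition of $\kap(\eps)$ in \eqref{E:kappa}, which produces precisely the $\kap(\eps)|x_0|$ contribution in \eqref{E:L1-estimate}. Assembling the four pieces, taking the supremum over $t\in[0,T]$ and expectation, and collecting $A,B,K,n$-dependent constants into a single $D_{\ref{L:L_Stoch_1-R-1-2}}$ yields the claimed bound.

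The main obstacle is the apparent $1/\eps$ blow-up in the definition of $L_1^{\eps,\delta}$, which is tamed only by the Taylor expansion $e^{hA}-I = hA + O(h^2|A|^2)$ combined with the period-averaging identity $\delta^{-1}\int_0^\delta h\,dh = \delta/2$: this cancellation is precisely the content of Lemma \ref{L:J_estimate}, which is the technical heart of the argument. Once that lemma is taken as given, the proof reduces to the bookkeeping outlined above, with the only subtlety being the careful propagation of the $(\cc+1)$ factors so that the final bound depends on $\cc$ only through $(\cc+1)^2$ and not through a more complicated combination.
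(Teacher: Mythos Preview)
Your proposal is correct and follows essentially the same approach as the paper's proof: the four-term decomposition you write down is exactly the paper's splitting into $\sfG_1^{\eps,\delta},\dots,\sfG_4^{\eps,\delta}$ (up to the trivial rewriting $(I-A^{-1}BK)=A^{-1}(A-BK)$), and your estimates for each term---via Theorem~\ref{T:flln}, Lemma~\ref{L:J_estimate}, the pointwise bound on $|x(\pi_\delta(s))-x(s)|$, and the definition of $\kap(\eps)$ respectively---match the paper's line by line.
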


%\begin{proof}[Proof of Lemma \ref{L:L_Stoch_1-R-1-2}]
\begin{proof}
%Below, $\mathsf{V}/V$ will be replaced by $\mathsf{G}$ since $V$ is being used for BM.
Setting $f^{\delta}(s)\triangleq \left[\frac{e^{(s-\pi_{\delta}(s))A}-I}{\delta}\right]$, we have
\begin{equation*}
%\begin{aligned}
L_1^{\eps,\delta}(t)-\ell(t) =
({\delta}/{\eps}) \int_0^t f^\delta(s)A^{-1}(A-BK)X_{\pi_{\delta}(s)}^{\eps,\delta}\thinspace ds-({\cc}/{2})\int_0^t (A-BK)x(s)\thinspace ds = \sum_{i=1}^4 \sfG_i^{\eps,\delta}(t),
\end{equation*}
where
\begin{equation}\label{E:V1-V4}
\begin{aligned}
\sfG_1^{\eps,\delta}(t) &\triangleq \frac{\delta}{\eps}\int_0^t f^{\delta}(s)A^{-1}(A-BK)\left(X_{\pi_{\delta}(s)}^{\eps,\delta}-x(\pi_{\delta}(s)) \right) \thinspace ds,\\
%\quad 
\sfG_2^{\eps,\delta}(t) &\triangleq \frac{\delta}{\eps}\int_0^t \left(f^{\delta}(s)-\frac{1}{2}A\right)A^{-1}(A-BK) x(\pi_{\delta}(s))\thinspace ds,\\
\sfG_3^{\eps,\delta}(t) &\triangleq \frac{1}{2}\frac{\delta}{\eps}\int_0^t(A-BK)\left(x(\pi_\delta(s))-x(s)\right) \thinspace ds, \qquad 
\sfG_4^{\eps,\delta}(t) \triangleq \frac{1}{2}\left(\frac{\delta}{\eps}-\cc\right)\int_0^t(A-BK)x(s)\thinspace ds. 
\end{aligned}
\end{equation}
Therefore,
\begin{equation}\label{E:L1-error-terms}
\BE\left[\sup_{0 \le t \le T} |L^{\eps,\delta}_1(t)-\ell(t)|\right]\le \BE[\sup_{0\le t \le T}|\sfG_1^{\eps,\delta}(t)|]+\sum_{i=2}^4\sup_{0\le t\le T}|\sfG_i^{\eps,\delta}(t)|.
\end{equation}
Since $f^\delta(s)=\delta^{-1}\int_0^{s-\pi_{\delta}(s)} A e^{rA} \thinspace dr$, we have $|f^{\delta}(s)| 
\le \delta^{-1}|A|\int_0^{s-\pi_{\delta}(s)}e^{\delta|A|}\thinspace dr
\le |A|e^{\delta |A|}$. We now easily get
%\begin{equation*}
$|\sfG_1^{\eps,\delta}(t)| \le \frac{\delta}{\eps}|A|e^{\delta |A|}|A^{-1}||A-BK|\int_0^t \sup_{0\le u \le s}\left|X_{u}^{\eps,\delta}-x(u)\right|\thinspace ds$. Hence, using Theorem \ref{T:flln}, we see that there exists a constant $C_{\ref{L:L_Stoch_1-R-1-2}}>0$ depending only on $A$, $B$, $K$ and $n$ such that 
%\end{equation*}
\begin{equation}\label{E:G1-eps-delta}
\begin{aligned}
\BE\left[\sup_{0\le t \le T}|\sfG_1^{\eps,\delta}(t)|\right]&\le \frac{\delta}{\eps}|A|e^{|A|}|A^{-1}||A-BK|\int_0^T \BE\left[\sup_{0\le u \le s}\left|X_{u}^{\eps,\delta}-x(u)\right|\right]\thinspace ds\\
%&\le \frac{\delta}{\eps}|A|e^{|A|}|A^{-1}||A-BK|\int_0^t C[\eps \sqrt{T}(T+1)+\delta(e^{TC}-1)|x_0|]e^{Cs}\thinspace ds\\
&\le \frac{\delta}{\eps}|A|e^{|A|}|A^{-1}||A-BK|C\left[\eps \sqrt{T}(T+1)+\delta e^{CT}|x_0|\right]e^{CT}\\
%&=\frac{\delta}{\eps}|A|e^{|A|}|A^{-1}||A-BK|[\eps \sqrt{T}(T+1)+\delta e^{TC}|x_0|]e^{CT}\\
& \le \frac{\delta}{\eps} C_{\ref{L:L_Stoch_1-R-1-2}}\left[\eps \sqrt{T}+\delta|x_0|\right]e^{C_{\ref{L:L_Stoch_1-R-1-2}} T}. 
%\qquad \tilde{C_1}=(1+C)\vee 2C.
\end{aligned}
\end{equation}
For $0< \eps < \eps_0$, we have $\delta< (\cc+1)\eps$, which implies that 
\begin{equation}\label{E:G1}
\BE\left[\sup_{0\le t \le T}|\sfG_1^{\eps,\delta}(t)|\right] \le \eps (\cc+1) C_{\ref{L:L_Stoch_1-R-1-2}} \left[\sqrt{T} + (\cc+1)|x_0|\right] e^{C_{\ref{L:L_Stoch_1-R-1-2}} T}.
\end{equation}
%\begin{align*}
%\BE[\sup_{0\le t \le T}|\sfG_1^{\eps,\delta}(t)|]& \le
%(c+1)|A|e^{|A|}|A^{-1}||A-BK|[\eps \sqrt{T}+(c+1)\eps|x_0|]e^{\tilde{C_1}T}\\
%& \le \tilde{K_1}[\eps \sqrt{T}+\eps|x_0|]e^{\tilde{C_1}T}=\eps \tilde{K_1}[\sqrt{T}+|x_0|]e^{\tilde{C_1}T},
%\end{align*}
%here $\tilde{K_1}$ is some constant depending $A,B,K$ and $c$, and $\tilde{C_1}$ depends on $n ,$ the dimension of Brownian mation only.
Noting that $\sfG_2^{\eps,\delta}(t)=\frac{\delta}{\eps}J^{\delta}(t)$, where $J^{\delta}(t)$ is given by \eqref{E:J-delta-def}, it follows from Lemma \ref{L:J_estimate} that for $0<\eps < \eps_0$, we have
\begin{equation}\label{E:G2}
\sup_{0\le t \le T}|\sfG_2^{\eps,\delta}(t)|\le \eps (\cc+1)^2 C_{\ref{L:J_estimate}} (1+T) e^{C_{\ref{L:J_estimate}}T}.
\end{equation}
%\begin{equation*}
%\frac{(c+1)^2\eps^2}{\eps}((c+1)\eps+1)C_{\ref{L:J_estimate}}e^{C_{\ref{L:J_estimate}}T} =\eps (c+1)^2[(c+1)\eps+1] C_{3.6}e^{C_{3.6}T}=\eps(\eps+1)\tilde{K_2}e^{\tilde{K_2}T},
%\end{equation*}
%here $\tilde{K_2}$ is some positive constant depending on $A,B,K$ and $c.$
To estimate $|\sfG_3^{\eps,\delta}(t)|$, we use the fact that $|x(\pi_\delta(s))-x(s)|\le \delta e^{|A-BK|}|A-BK|e^{s|A-BK|}|x_0|$ and easily check that for $0<\eps <\eps_0$, we have 
$\sup_{0\le t \le T}|\sfG_3^{\eps,\delta}(t)| \le \frac{1}{2}\eps (\cc+1)^2 |A-BK| e^{|A-BK|} |x_0| e^{|A-BK|T}$. Hence, there exists a constant $\tilde{C}_{\ref{L:L_Stoch_1-R-1-2}}>0$ depending only on $A$, $B$, $K$ such that 
\begin{equation}\label{E:G3}
\sup_{0\le t \le T}|\sfG_3^{\eps,\delta}(t)| \le \eps(\cc+1)^2 |x_0| \tilde{C}_{\ref{L:L_Stoch_1-R-1-2}} e^{\tilde{C}_{\ref{L:L_Stoch_1-R-1-2}}T}.
\end{equation}
%that $|\sfG_3^{\eps,\delta}(t)|\le \frac{1}{2}\frac{\delta}{\eps}|A-BK|\int_0^t|x(\pi_\delta(s))-x(s)| \thinspace ds$. 
%since we know that  we get 
%\begin{align*}
%\sup_{0\le t \le T}|\sfG_3^{\eps,\delta}(t)|&\le \frac{1}{2}\frac{\delta}{\eps}|A-BK|\delta e^{|A-BK|}|A-BK||x_0|\int_0^t e^{s|A-BK|}\thinspace ds\\
%&=\frac{1}{2}\frac{\delta}{\eps}|A-BK|\delta e^{|A-BK|}|x_0|(e^{T|A-BK|}-1)\le \frac{1}{2}\eps(c+1)^2|A-BK|e^{|A-BK|}|x_0|e^{T|A-BK|}\\
%&=\eps|x_0|\tilde{K_3}e^{\tilde{K_3}T},
%\end{align*}
%here $\tilde{K_3}$ is some constant independent of $\eps, \delta$ and dependent on $A, B,K$ and $c$. 
It is easily checked that 
\begin{equation}\label{E:G4}
\sup_{0\le t \le T}|\sfG_4^{\eps,\delta}(t)| \le \frac{1}{2} \kap(\eps) |x_0| e^{|A-BK|T}.
\end{equation}
Putting together equations \eqref{E:L1-error-terms} through \eqref{E:G4}, some simple calculations yield \eqref{E:L1-estimate}.
%Now for ,
%\begin{align*}
%\sup_{0\le t \le T}|\sfG_4^{\eps,\delta}(t)|&\le \frac{1}{2}|\frac{\delta}{\eps}-c|\int_0^t|A-BK||x(s)|\thinspace ds\\
%&\le \frac{1}{2}|\frac{\delta}{\eps}-c||A-BK|Te^{T|A-BK|}|x_0|=|\frac{\delta}{\eps}-c||x_0|e^{\tilde{K_4}T},
%\end{align*}
%here $\tilde{K_4}$ is some constant depending on $A,B$ and $K.$
%\begin{multline*}
%\BE\left[\sup_{0 \le t \le T} |L^{\eps,\delta}_1(t)-\ell(t)|\right]\le \eps \tilde{K_1}[\sqrt{T}+|x_0|]e^{\tilde{C_1}T}+\eps(\eps+1)\tilde{K_2}e^{\tilde{K_2}T}+\eps|x_0|\tilde{K_3}e^{\tilde{K_3}T}+|\frac{\delta}{\eps}-c||x_0|e^{\tilde{K_4}T}.
%\end{multline*}
%If $\tilde{K_5}\triangleq \max\{\tilde{K_i}, \tilde{C_1};i=1,...,4\},$
%\begin{align*}
%\BE\left[\sup_{0 \le t \le T} |L^{\eps,\delta}_1(t)-\ell(t)|\right]\le [\eps(\sqrt{T}+|x_0|)+\eps(\eps+1)+\eps|x_0|+|\frac{\delta}{\eps}-c||x_0|]\tilde{K_5}e^{\tilde{K_5}T},
%\end{align*}
%here $\tilde{K_5}$ depends on $A,B,K$ and $n.$ 
\end{proof}

\begin{lem}\label{L:L_det-R-1-2}
For $0 < \eps < \eps_0$, we have $\BE\left[\sup_{0 \le t \le T} |L^{\eps,\delta}_2(t)|\right] \le \sqrt{\eps} (\cc+1){K_{\ref{L:L_det-R-1-2}}} e^{K_{\ref{L:L_det-R-1-2}}T}$; where ${K_{\ref{L:L_det-R-1-2}}}$ is some positive constant depending only on $A$ and $n$.
\end{lem}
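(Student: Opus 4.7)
The plan is to exploit the fact that $L_2^{\eps,\delta}(t)$ is a time-integral of martingale increments $M_s - M_{\pi_\delta(s)}$ over windows of length at most $\delta$, so its typical magnitude is of order $\sqrt{\delta}$; then the regime hypothesis $\delta < (\cc+1)\eps$ converts this into the required $\sqrt{\eps}$-scaling. Concretely, I would first move the supremum inside the time integral by the pathwise triangle inequality: since the integrand in modulus is nonnegative,
\begin{equation*}
\sup_{0 \le t \le T} |L_2^{\eps,\delta}(t)| \le \int_0^T |e^{sA}|\,|M_s - M_{\pi_\delta(s)}|\,ds \le e^{T|A|} \int_0^T |M_s - M_{\pi_\delta(s)}|\,ds.
\end{equation*}
Taking expectations and applying Fubini, the task reduces to a uniform pointwise bound on $\BE\,|M_s - M_{\pi_\delta(s)}|$ for $s \in [0,T]$.

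For this, I would use that $M_s - M_{\pi_\delta(s)} = \int_{\pi_\delta(s)}^s e^{-rA}\,dW_r$ is a mean-zero Gaussian vector, so by the It\^o isometry applied componentwise (as in the derivation of \eqref{E:BDG}) together with the standard bound $|e^{-rA}| \le e^{r|A|}$ on the operator norm and the elementary inequality $\sum_{i,j}(e^{-rA})_{ij}^2 \le n|e^{-rA}|^2$, one obtains
\begin{equation*}
\BE |M_s - M_{\pi_\delta(s)}|^2 \le n\,e^{2T|A|}\,(s - \pi_\delta(s)) \le n\,e^{2T|A|}\,\delta.
\end{equation*}
Jensen's inequality (equivalently, $\BE|X|\le\sqrt{\BE|X|^2}$) then yields $\BE|M_s - M_{\pi_\delta(s)}| \le \sqrt{n\delta}\,e^{T|A|}$, uniformly in $s \in [0,T]$.

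Feeding this back, $\BE\sup_{0 \le t \le T}|L_2^{\eps,\delta}(t)| \le T\sqrt{n\delta}\,e^{2T|A|}$, a quantity of order $\sqrt{\delta}$. Since we are in Regime $1$ or $2$ and $0 < \eps < \eps_0$, the defining inequality in \eqref{E:eps0} gives $\delta < (\cc+1)\eps$, whence $\sqrt{\delta} < \sqrt{\cc+1}\sqrt{\eps} \le (\cc+1)\sqrt{\eps}$ (using $\cc \ge 0$, so $\sqrt{\cc+1} \le \cc+1$). Absorbing all the $T$, $n$, $|A|$-dependence into a single constant of the form $K_{\ref{L:L_det-R-1-2}}e^{K_{\ref{L:L_det-R-1-2}}T}$ depending only on $A$ and $n$ yields the announced estimate. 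I do not anticipate any real obstacle; the key qualitative point is that the martingale increment has quadratic variation bounded by a constant times $\delta$, and \emph{that} observation is what requires the It\^o isometry rather than a cruder bound on $\sup_{0 \le s \le T}|M_s|$ (the latter would throw away the $\sqrt{\delta}$ gain and give only an $O(1)$ estimate).
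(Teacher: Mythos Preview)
Your argument is correct and in fact more direct than the paper's. The paper does not apply the It\^o isometry to $M_s-M_{\pi_\delta(s)}$ directly; instead it first invokes the integration-by-parts identity \eqref{E:mart} to rewrite $e^{sA}(M_s-M_{\pi_\delta(s)})$ in terms of $W$, obtaining three pieces: an $(I-e^{(s-\pi_\delta(s))A})W_s$ term, an $e^{(s-\pi_\delta(s))A}(W_s-W_{\pi_\delta(s)})$ term, and a residual integral $e^{sA}\int_{\pi_\delta(s)}^s e^{-rA}AW_r\,dr$. The first and third are bounded by $\delta$ times $\sup_{r\le s}|W_r|$ (using $|e^{hA}-I|\le\delta|A|e^{|A|}$), while the second contributes the dominant $\sqrt{\delta}$ via $\BE|W_s-W_{\pi_\delta(s)}|\le\sqrt{n\delta}$; after integrating in $s$ and applying Burkholder--Davis--Gundy to the $\sup|W|$ terms, the paper obtains the same $\sqrt{\delta}$ bound you reach. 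Your route---pull out $|e^{sA}|\le e^{T|A|}$, then bound $\BE|M_s-M_{\pi_\delta(s)}|$ by the square root of its It\^o-isometry second moment---bypasses this decomposition entirely and arrives at the estimate in one step. The only cost is that you get a constant $e^{2T|A|}$ from bounding $|e^{-rA}|$ over $[0,T]$, whereas the paper's intermediate constants involve $e^{|A|}$ and $e^{2T|A|}$ in slightly different combinations; once everything is absorbed into $K_{\ref{L:L_det-R-1-2}}e^{K_{\ref{L:L_det-R-1-2}}T}$ this distinction disappears.
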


\begin{proof}
%[Proof of Lemma \ref{L:L_det-R-1-2}]
For any $t \ge 0$, we have
$e^{tA}(M_t-M_{\pi_\delta(t)})=W_t-e^{(t-\pi_\delta(t)) A}W_{\pi_\delta(t)}+e^{tA}\int_{\pi_\delta(t)}^t e^{-sA} AW_s \thinspace ds$.
%&=\varepsilon W_t-\varepsilon e^{(t-t_k)A}W_{t}+\varepsilon e^{(t-t_k)A}W_{t}-\varepsilon e^{(t-t_k)A}W_{t_k}+\varepsilon e^{tA}\int_{t_k}^t e^{-sA} AW_s \thinspace ds.
%\end{align*}
Adding and subtracting $e^{(t-\pi_\delta(t)) A}W_t$ on the right, we now have the estimate
\begin{align*}
|e^{tA}(M_t-M_{\pi_\delta(t)})| 
%&\le |I-e^{(t-\pi_\delta(t))A}||W_t|+|e^{(t-\pi_%\delta(t))A}||W_t-W_{\pi_\delta(t)}|+|e^{tA}|\int_{\pi_%\delta(t)}^t |e^{-sA}||A||W_s|\thinspace ds\\
& \le |I-e^{(t-\pi_\delta(t))A}|\sup_{0\le s \le t}|W_s|+e^{(t-\pi_\delta(t))|A|}|W_t-W_{\pi_\delta(t)}|+e^{t|A|}\left(\int_{\pi_\delta(t)}^t e^{s|A|}|A|\thinspace ds\right)\sup_{0\le s \le t}|W_s|\\
&\le |I-e^{(t-\pi_\delta(t))A}|\sup_{0\le s \le t}|W_s|+e^{\delta|A|}|W_t-W_{\pi_\delta(t)}|+e^{2t|A|}|A| \delta\sup_{0\le s \le t}|W_s|,
\end{align*}
where we have used the fact that for any $s \in \BR$, $|e^{sA}| \le e^{|s||A|}$. Since 
%\begin{align*}
$|e^{hA}-I|\le \int_0^h|A||e^{sA}|\thinspace ds \le h|A|e^{h|A|} \le \delta|A|e^{\delta|A|} \le \delta|A|e^{|A|}$ for any $h \in [0,\delta)$, we have\\
%\end{align*}
%\begin{equation*}
$|e^{tA}(M_t-M_{\pi_\delta(t)})| \le \delta |A| \left(e^{|A|} + e^{2t|A|}\right) \sup_{0 \le s \le t} |W_s| + e^{|A|} |W_t - W_{\pi_\delta(t)}|.$ 
%\end{equation*}
Recalling \eqref{E:errors}, we get\\
%\begin{equation*}
$|L_2^{\eps,\delta}(t)|\le 
%\int_0^t|e^{sA}(M_s-M_{\pi_{\delta}(s)})|\thinspace ds
%&\le \int_0^t \left[\delta |A| \left(e^{|A|} + e^{2s|A|}\right) \sup_{0 \le r \le s} |W_r| + e^{|A|} |W_s - W_{\pi_\delta(s)}|\right]\thinspace ds\\
 \delta |A| \left(e^{|A|} + e^{2t|A|}\right)\int_0^t \sup_{0 \le r \le s} |W_r| \thinspace ds+ e^{|A|}\int_0^t |W_s - W_{\pi_\delta(s)}|\thinspace ds.$
%\end{equation*}
Taking supremum over $t \in [0,T]$, followed by expectation, we use the fact that $\BE\left[|W_s-W_{\pi_\delta(s)}|\right] \le \sqrt{n\delta}$ to get 
\begin{equation*}
\BE\left[\sup_{0\le t \le T}|L_2^{\eps,\delta}(t)|\right] \le \delta |A| \left(e^{|A|} + e^{2T|A|}\right)\int_0^T \BE\left[\sup_{0 \le r \le s} |W_r|\right] \thinspace ds+ e^{|A|}T\sqrt{n\delta }.
\end{equation*}
Straightforward calculations using the Burkholder-Davis-Gundy inequality now yield the result.
%Using , show that there exists $K_{\ref{L:L_det-R-1-2}}>0$ such that 
%\begin{equation*}
%\BE\left[\sup_{0\le t \le T}|L_2^{\eps,\delta}(t)|\right] \le K_{\ref{L:L_det-R-1-2}}\left(\delta+\sqrt{\delta}\right)e^{K_{\ref{L:L_det-R-1-2}}T}.
%\end{equation*}
\end{proof}

%together with the to get that there exists a constant 
%\begin{align*}
%\BE\left[\sup_{0\le t \le T}|L_2^{\eps,\delta}(t)|\right]&\le \delta |A| \left(e^{|A|} + e^{2T|A|}\right)\int_0^T \BE\left[\sup_{0 \le r \le s} |W_r|\right] \thinspace ds+ e^{|A|}T\sqrt{n\delta }\\
%&\le \delta |A| \left(e^{|A|} + e^{2T|A|}\right)\frac{2}{3}\sqrt{n}\Lambda_{\frac{1}{2}}T^{\frac{3}{2}}+ e^{|A|}T\sqrt{n\delta }\\
%&=\delta T^{\frac{3}{2}}\zeta_{1}(n)+\delta T^{\frac{3}{2}}\zeta_{1}(n)e^{2T|A|}+\zeta_2(n)T\sqrt{\delta }\\
%&\le \delta T^{\frac{3}{2}}\zeta(n)(1+e^{2T|A|})+\zeta(n)T\sqrt{\delta },
%\end{align*}
%here $\zeta\triangleq \max\{\zeta_1,\zeta_2\}$ is some constant depending on $A,B,K$ and $n$. Now if $T\in [0,1]$ then $T^{\frac{3}{2}}\le T$ and $T\le e^{T}$ and for $T\ge 1,\thinspace T^{\frac{3}{2}}\le T^2$ and $T^2\le e^T.$ Then for the above inequality, we have 
%\begin{align*}
%\BE\left[\sup_{0\le t \le T}|L_2^{\eps,\delta}(t)|\right]&\le \delta e^T\zeta(n)(1+e^{2T|A|})+\zeta(n)e^T\sqrt{\delta}\\
%&\le K_{\ref{L:L_det-R-1-2}}\left(\delta+\sqrt{\delta}\right)e^{K_{\ref{L:L_det-R-1-2}}T},
%\end{align*}
%here $K_{\ref{L:L_det-R-1-2}}$ is some positive constant depending on $A,B,K$ and $n.$

\begin{lem}\label{L:L_Stoch_2-R-1-2}
For $0 < \eps < \eps_0$, we have $\BE\left[\sup_{0 \le t \le T} |L^{\eps,\delta}_3(t)|\right] \le \eps (\cc+1) K_{\ref{L:L_Stoch_2-R-1-2}}e^{K_{\ref{L:L_Stoch_2-R-1-2}}T}$,  where $K_{\ref{L:L_Stoch_2-R-1-2}}$ is some positive constant depending only on $A,B,K$ and $n$.
\end{lem}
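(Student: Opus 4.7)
The plan is to estimate $L_3^{\eps,\delta}(t)$ in a very direct way by (i) bounding the matrix factor $e^{(s-\pi_\delta(s))A}-I$ pointwise in $s$ by a quantity of order $\delta$, then (ii) bounding the expected time-integral of $|V_{\pi_\delta(s)}|$ essentially as in the proof of Theorem \ref{T:flln}, and finally (iii) converting $\delta$ into $\eps$ using the standing regime hypothesis $\delta<(\cc+1)\eps$ for $\eps\in(0,\eps_0)$.

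For step (i), I would reuse the elementary bound already derived in the proof of Lemma \ref{L:L_det-R-1-2}: for any $h\in[0,\delta)$,
\begin{equation*}
|e^{hA}-I|\le\int_0^h |A|\,e^{s|A|}\,ds\le \delta|A|e^{|A|},
\end{equation*}
applied at $h=s-\pi_\delta(s)$. Pulling this outside the integral in \eqref{E:errors} gives
\begin{equation*}
\sup_{0\le t\le T}|L_3^{\eps,\delta}(t)|\le \delta|A|e^{|A|}|A^{-1}BK|\int_0^T |V_{\pi_\delta(s)}|\,ds.
\end{equation*}

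For step (ii), taking expectations and invoking the estimate \eqref{E:G-delta} established inside the proof of Theorem \ref{T:flln}, namely $\BE\bigl[\int_0^T|V_{\pi_\delta(s)}|\,ds\bigr]\le \sqrt{nT}(T+1)$, yields
\begin{equation*}
\BE\Bigl[\sup_{0\le t\le T}|L_3^{\eps,\delta}(t)|\Bigr]\le \delta\,|A|e^{|A|}|A^{-1}BK|\sqrt{nT}(T+1).
\end{equation*}

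For step (iii), since $0<\eps<\eps_0$ forces $\delta<(\cc+1)\eps$ by \eqref{E:eps0}, I absorb $\delta$ into $(\cc+1)\eps$ and then bound the polynomial factor $\sqrt{nT}(T+1)$ by $\tilde K e^{\tilde K T}$ for a suitable constant $\tilde K=\tilde K(n)$. Collecting all the $A,B,K,n$-dependent prefactors into a single constant $K_{\ref{L:L_Stoch_2-R-1-2}}$ then gives the stated bound. There is no real obstacle here: unlike $L_1^{\eps,\delta}$, the process $L_3^{\eps,\delta}$ contains no state $X^{\eps,\delta}$, so no Gr\"onwall loop is needed and no centering of a drift term is required; the lemma is essentially a one-line consequence of the fact that the integrand of $L_3^{\eps,\delta}(t)$ is of size $\delta$ in the matrix factor while $V_{\pi_\delta(s)}$ has first moment of order $\sqrt{s}$.
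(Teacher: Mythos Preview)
Your proof is correct and follows essentially the same approach as the paper: bound $|e^{(s-\pi_\delta(s))A}-I|$ by $\delta|A|e^{|A|}$, control the expected time-integral of $|V_{\pi_\delta(s)}|$, and then convert $\delta$ to $(\cc+1)\eps$ via \eqref{E:eps0}. The only minor difference is that the paper replaces $|V_{\pi_\delta(s)}|$ by $\sup_{0\le u\le s}|V_u|$ and appeals to the Burkholder--Davis--Gundy inequality, whereas you more economically reuse the direct moment estimate \eqref{E:G-delta} from the proof of Theorem~\ref{T:flln}; either route works.
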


\begin{proof}
%[Proof of Lemma \ref{L:L_Stoch_2-R-1-2}]
Recalling that $|e^{(s-\pi_{\delta}(s))A}-I| \le \delta |A| e^{\delta |A|}$, we get
\begin{equation*}
|L_3^{\eps,\delta}(t)| \le \int_0^t|e^{(s-\pi_{\delta}(s))A}-I||A^{-1}BK|
|V_{\pi_{\delta}(s)}|\thinspace ds 
\le \delta |A|e^{\delta |A|}|A^{-1}BK|\int_0^t\sup_{0\le u\le s}|V_u|\thinspace ds.
\end{equation*}
Taking supremum over $t \in [0,T]$, followed by expectation, we get
\begin{equation*}
\BE\left[\sup_{0\le t \le T}|L_3^{\eps,\delta}(t)|\right] \le \eps(\cc+1) |A|e^{ |A|}|A^{-1}BK|\int_0^T \BE\left[\sup_{0\le u\le s}|V_{u}|\right]\thinspace ds,
\end{equation*}
where we have used the fact that for $0<\eps<\eps_0$, one has $\delta < (\cc+1) \eps$. 
Once again, straightforward calculations using the Burkholder-Davis-Gundy inequalities yield the desired result.
%&\le \eps(c+1)|A|e^{|A|}|A^{-1}BK|\int_0^t \Lambda_{\frac{1}{2}}(ns)^{\frac{1}{2}}\thinspace ds\le \eps \theta(n)T^{\frac{3}{2}},
%\end{align*} 
%where $\theta(n)$ is some constant depending on $A,B,K,n$ and $c.$
%Now if $T\in [0,1]$ then $T^{\frac{3}{2}}\le T$ and $T\le e^{T}$ and for $T\ge 1,\thinspace T^{\frac{3}{2}}\le T^2$ and $T^2\le e^T.$ Then for the above inequality we can write 
%\begin{equation*}
%\BE\left[\sup_{0\le t \le T}|L_3^{\eps,\delta}(t)|\right]\le \eps \theta(n)e^T\le \eps K_{\ref{L:L_Stoch_2-R-1-2}}e^{K_{\ref{L:L_Stoch_2-R-1-2}}T}.
%\end{equation*}
\end{proof}

%\color{blue} (\textsf{Okay so far. Please leave above unchanged.}) \color{black} 
%\newpage

We now provide the proof of Proposition \ref{P:key-estimate-R-1-2}.
\begin{proof}[Proof of Proposition \ref{P:key-estimate-R-1-2}]
%\begin{popkeyr12}
By Lemma \ref{L:errors}, we have $\int_0^t \frac{X_s^{\varepsilon, \delta}-X_{\pi_\delta(s)}^{\varepsilon, \delta}}{\eps}\thinspace ds - \ell(t)=\sum_{i=1}^3 L_i^{\eps,\delta}(t) - \ell(t)$. Straightforward calculations using Lemmas \ref{L:L_Stoch_1-R-1-2}, \ref{L:L_det-R-1-2} and \ref{L:L_Stoch_2-R-1-2} easily yield the result.
\end{proof}
%\end{popkeyr12}
%Using lemmas , , we get
%\begin{multline*}
%\begin{aligned}
%\BE\left[\left|\int_0^t \frac{X_s^{\varepsilon, \delta}-X_{\pi_\delta(s)}^{\varepsilon, \delta}}{\eps}\thinspace ds - 
%\frac{c}{2} \left(e^{t(A-BK)}-I\right)x_0\right|\right]&\le \BE\left[\sup_{0 \le t \le T} |{L}^{\eps,\delta}_1(t)-{\ell}(t)|\right]+\BE\left[\sup_{0 \le t \le T} |{L}^{\eps,\delta}_2(t)|\right]+\BE\left[\sup_{0 \le t \le T} |{L}^{\eps,\delta}_3(t)|\right]\\
%&\le [\eps(\sqrt{T}+|x_0|)+\eps(\eps+1)+\eps|x_0|+|\frac{\delta}{\eps}-c||x_0|]{K_{\ref{L:L_Stoch_1-R-1-2}}}e^{{K_{\ref{L:L_Stoch_1-R-1-2}}}T}\\
%&\hspace{0.9cm}+\sqrt{\delta}{K_{\ref{L:L_det-R-1-2}}}e^{K_{\ref{L:L_det-R-1-2}}T}+\eps K_{\ref{L:L_Stoch_2-R-1-2}}e^{K_{\ref{L:L_Stoch_2-R-1-2}}T}.
%\end{aligned}
%\end{multline*}
%Thus,
%\begin{multline*}
%\BE\left[\left|\int_0^t \frac{X_s^{\varepsilon, \delta}-X_{\pi_\delta(s)}^{\varepsilon, \delta}}{\eps}\thinspace ds - 
%\frac{c}{2} \left(e^{t(A-BK)}-I\right)x_0\right|\right]\\
%\le \left[\eps(\sqrt{T}+|x_0|)+\eps(\eps+1)+\eps(|x_0|+1)+|\frac{\delta}{\eps}-c||x_0|+\sqrt{\delta}\right]K_{\ref{P:key-estimate-R-1-2}}e^{K_{\ref{P:key-estimate-R-1-2}}T},
%\end{multline*}
%where $K_{\ref{P:key-estimate-R-1-2}}$
% is some positive constant depending on $A,B,K$ and $n.$

Finally, we prove Theorem \ref{T:fluctuations-R-1-2}. 
%\begin{potflr12}
\begin{proof}[Proof of Theorem \ref{T:fluctuations-R-1-2}]
To start, we note that 
$X^{\eps,\delta}_t - x(t) - \eps Z_t = \eps \left(Z^{\eps,\delta}_t - Z_t \right)$, where $Z^{\eps,\delta}_t$ and $Z_t$ are given by \eqref{E: cent-limit-eq} and \eqref{E:lim-fluct-R-1-2}, respectively. 
%\begin{multline}\label{E:fluct-error}
%Z^{\eps,\delta}_t - Z_t = \int_0^t (A-BK) \left(Z^{\eps,\delta}_s-Z_s\right) \thinspace ds - BK \int_0^t \left(V_{\pi_\delta(s)}-V_s\right) \thinspace ds\\ + BK \int_0^t \frac{X_s^{\varepsilon, \delta}-X_{\pi_\delta(s)}^{\varepsilon, \delta}}{\eps}\thinspace ds - BK \ell(t),
%\end{multline}
It now easily follows that for any $t \in [0,T]$, we have
\begin{multline*}
\sup_{0 \le s \le t} \left|Z^{\eps,\delta}_s - Z_s\right| \le \int_0^t |A-BK| \sup_{0 \le r \le s}\left|Z^{\eps,\delta}_r-Z_r\right| \thinspace ds + |BK| \int_0^t \left|V_{\pi_\delta(s)}-V_s\right| \thinspace ds\\ + |BK| \sup_{0 \le s \le t}\left|\int_0^s \frac{X_r^{\varepsilon, \delta}-X_{\pi_\delta(r)}^{\varepsilon, \delta}}{\eps}\thinspace dr - 
\ell(s)\right|,
\end{multline*}
where $\ell(t)$ is defined in \eqref{E:ell}. 
Now, by Proposition \ref{P:key-estimate-R-1-2} and the fact that $\BE|V_s-V_{\pi_{\delta}(s)}|\le \sqrt{n(s-\pi_{\delta}(s))}\le \sqrt{n \delta}$, we get
\begin{multline*}
\BE\left[\sup_{0 \le s \le t} \left|Z^{\eps,\delta}_s - Z_s\right|\right]\le \int_0^t|A-BK|\medspace\BE\left[\sup_{0\le r \le s}\left|Z^{\eps,\delta}_r - Z_r\right|\right]\thinspace ds+|BK|T\sqrt{n\delta }\\+|BK| (\cc+1)^2 K_{\ref{P:key-estimate-R-1-2}} e^{K_{\ref{P:key-estimate-R-1-2}} T} \left[\sqrt{\eps}(|x_0| +1 + T) + \kap(\eps)|x_0|\right].
\end{multline*}
Straightforward calculations using the Gronwall's inequality yield the desired result.
\end{proof}

\section{Numerical Example and Simulation}\label{S:Example}
In this section, we illustrate our results in the context of a simple optimal control problem. Let $A,B,Q,R$ be the matrices 
\begin{equation*}
A = \begin{bmatrix}
0 & 1 \\
0.5 & 0
\end{bmatrix}, \quad
B = \begin{bmatrix}
0 \\
 1
\end{bmatrix}, \quad 
Q = \begin{bmatrix}
1 & 0 \\
0 & 1
\end{bmatrix} \quad \text{and} \quad
R = [1],
\end{equation*}
and consider the linear control system $\dot{x}(t)=Ax(t)+Bu(t)$ with $x \in \BR^2$, $u \in \BR$, that can also be viewed as a linearized model of a damp free inverted pendulum \cite{antunes2011volterra}. The infinite time horizon \textit{linear quadratic regulation (LQR)} problem entails finding the control $u(t)$ which minimizes the cost functional $J_{LQR} \triangleq \int_0^{\infty}x(t)^\top Qx(t)+u(t)^\top Ru(t)\thinspace dt$. Since the pairs $(A,B)$ and $(A,Q)$ are stabilizable and detectable respectively, standard results (see, for instance \cite[Theorem 21.2]{Hes_LST}) imply that there exists a symmetric solution $P$ to the algebraic Riccati equation $A^\top P + P A + Q - P B R^{-1} B^\top P=0$ such that $A - B R^{-1} B^\top P$ is a stability matrix. Further, the state feedback control law $u=-Kx$ with $K \triangleq R^{-1} B^\top P$ stabilizes the closed-loop system and is optimal in the sense that it minimizes the cost $J_{LQR}$. For the present example, the matrix $K$ is computed using the MATLAB command $\mathtt{[K]=lqr(A,B,Q,R)}$ and is obtained to be $K = \begin{bmatrix}
1.618  &  2.058
\end{bmatrix}$. 

We now consider a sample-and-hold implementation of this system with periodic sampling at times $k\delta$, $k \in \BZ^+$, and small white noise perturbations of size $\eps$ in the state dynamics; here, $0 < \eps,\delta \ll 1$. For simplicity, we assume that the measurement noise is absent. As in Theorem \ref{T:fluctuations-R-1-2}, we would like to compare the stochastic process $X^{\eps,\delta}(t)$ with $S^\eps(t)\triangleq x(t) + \eps Z(t)$, where $x(t)=e^{t(A-BK)}x(0)$, $X^{\eps,\delta}(t)$ solves the {\sc sde} \eqref{E:sie} with $X^{\eps,\delta}(0)=x(0)$, and $Z(t)$ solves the {\sc sde} \eqref{E:lim-fluct-R-1-2}. The stochastic differential equations for $X^{\eps,\delta}(t),Z(t)$ are solved numerically using the Euler-Maruyama method \cite{Higham2001,KloedenPlaten}. Given that Theorem \ref{T:fluctuations-R-1-2} is a strong (pathwise) approximation result, we use the same Brownian increments to generate the paths of $X^{\eps,\delta}(t)$ and $Z(t)$. 

Figure \ref{F:SamplePath} shows a sample path for $(X_1^{\eps,\delta}(t),X_2^{\eps,\delta}(t))$ along with the corresponding sample path for $(S_1^\eps(t),S_2^\eps(t))$ 
%where $S_i^{\eps}(t)=x_i(t)+\eps Z_i(t)$, $i \in \{1,2\}$  
with step size $\Delta t=2^{-5}$, $T=2^3$, $\eps= 2^{-5},~\delta= 2^{-4}$ (and hence $\cc=2$) and with initial conditions $x_1(0)=1.5,~ x_2(0)=0.5,~ Z_1(0)= Z_2(0)=0$. The effect of varying $\eps$ on the error $X^{\eps,\delta}(t)-S^\eps(t)$ is explored in Figure \ref{F:ErrorPlot}. Here, for $\eps=2^{-i}$, $1 \le i \le 7$, and other parameters as earlier, we generate 1000 sample paths of $X^{\eps,\delta}(t)$ and $S^\eps(t)$. Let $e_i$ be the vector $(e_{i,1},e_{i,2})$, where, for $j=1,2$, the quantity $e_{i,j}$ is the mean of $|X^{\eps,\delta}_j(T)-S^\eps_j(T)|$ over the 1000 realizations with $\eps=2^{-i}$. The components $e_{i,1},e_{i,2}$ are plotted against $\eps$ on a $\log_2$-$\log_2$ scale. The plot clearly shows that for $\eps=2^{-i}$, $1 \le i \le 7$, $j=1,2$, the quantity $\log_2 e_{i,j}$ decreases linearly with increasing $i$ (i.e., decreasing $\eps$).

\begin{figure}[h!]
\begin{center}
\includegraphics[height=8.8cm,width=11.8cm]{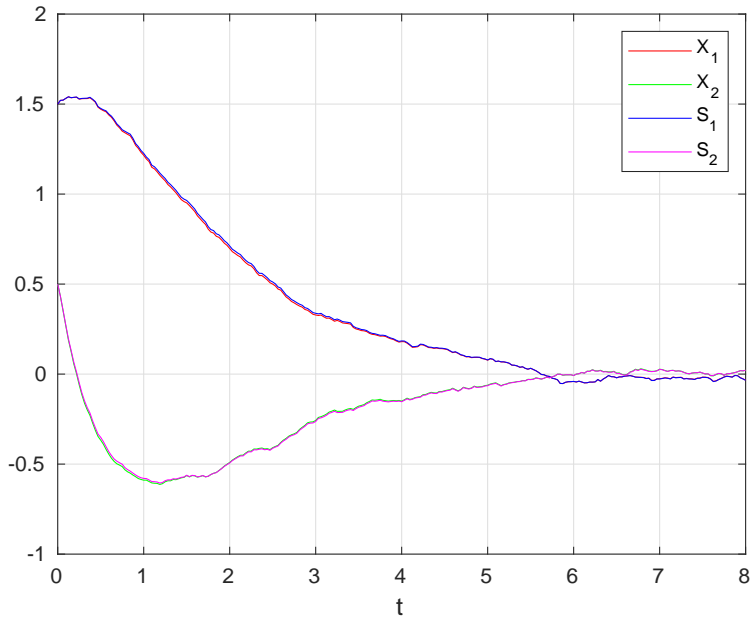}
\caption{Sample paths for the components $X_1\triangleq X_1^{\eps,\delta}(t),~X_2 \triangleq X_2^{\eps,\delta}(t)$ of the {\sc sde} defined by \eqref{E:sie}  and $S_1 \triangleq S_1^{\eps}(t)=x_1(t)+\eps Z_1(t),~S_2 \triangleq S_2^{\eps}(t)=x_2(t)+\eps Z_2(t)$  with $\eps=2^{-5},~\delta=2^{-4},$ and $T=2^3$. Here $Z_1(t)$ and $Z_2(t)$ are the components of $Z(t)$ defined in \eqref{E:lim-fluct-R-1-2}.}\label{F:SamplePath}
\end{center}
\end{figure}

\begin{figure}[h!]
\begin{center}
\begin{equation*}
\includegraphics[height=8.8cm,width=11.8cm]{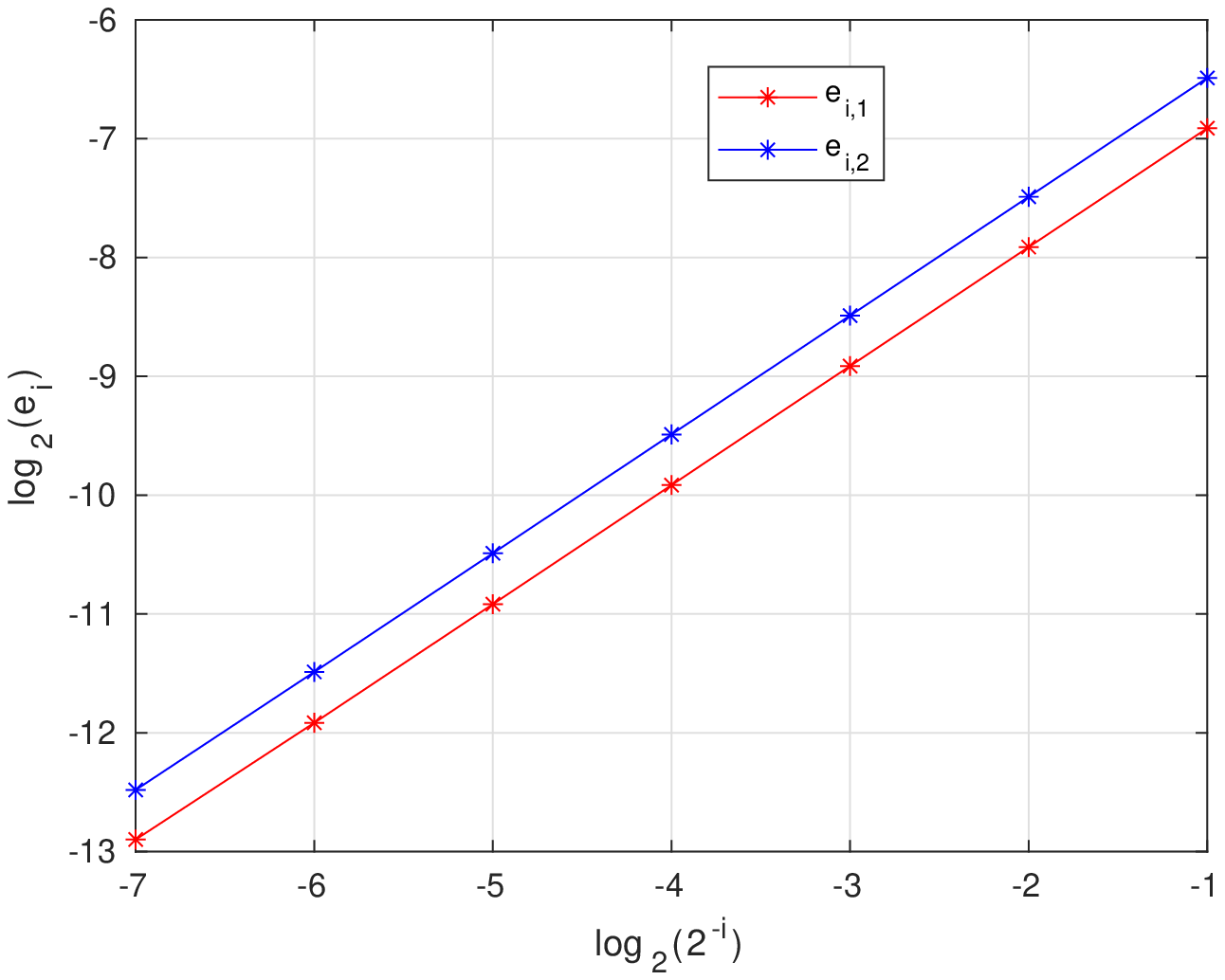}
\end{equation*}
\caption{For $1 \le i \le 7$, let $e_i$ be the vector $(e_{i,1},e_{i,2})$, where, for $j=1,2$, the quantity $e_{i,j}$ is the mean of $|X^{\eps,\delta}_j(T)-S^\eps_j(T)|$ over 1000 sample paths generated by the Euler-Maruyama method, with $T=2^3$, $\delta= 2^{-4}$. On a $\log_2$-$\log_2$ scale, we see that as $\eps\triangleq 2^{-i}$ decreases, the corresponding error $e_{i,j}$ also decreases. 
%Here the error process has been simulated over the $1000$ discretized Brownian motion paths.
}\label{F:ErrorPlot}
\end{center}
\end{figure}

\section{Conclusions}\label{S:Conclusions}
In this article, we have studied the combined effect of small Brownian perturbations and fast periodic sampling on the evolution of linear control systems. For the ensuing continuous-time stochastic process indexed by two small parameters, we obtain
% a first-order perturbation expansion, in terms of 
 effective ordinary and stochastic differential equations  describing the limiting mean behavior and the typical fluctuations about the mean. The effective fluctuation process is found to vary, depending on the relative rates at which the two small parameters approach zero. The results are illustrated in the context of an infinite time horizon LQR problem. The calculations here suggest several avenues for further exploration, such as looking at nonlinear control systems with periodic sampling subjected to small white noise perturbations, or studying similar questions for the case of fast random, rather than periodic, sampling. 

\bibliographystyle{alpha}
\bibliography{Control}

%\end{spacing}
\end{document}